\newtheorem{thrm}{Theorem} 
\newtheorem{lem}[thrm]{Lemma}
\newtheorem{prop}[thrm]{Proposition}
\theoremstyle{definition}
\newtheorem{defn}[thrm]{Definition}
\newtheorem{rem}[thrm]{Remark}
\newtheorem*{theoremA}{Theorem A}
\newtheorem*{theoremB}{Theorem B}
\crefname{thrm}{Theorem}{Theorems}
\crefname{lem}{Lemma}{Lemmas}
\crefname{cor}{Corollary}{Corollaries}
\crefname{prop}{Proposition}{Propositions}
\crefname{defn}{Definition}{Definitions}
\crefname{exm}{Example}{Examples}
\crefname{rem}{Remark}{Remarks}
\crefname{section}{Section}{Sections}
\crefname{equation}{\unskip}{\unskip}
\crefname{enumi}{\unskip}{\unskip}
\DeclareMathOperator{\spn}{span}
\renewcommand{\iff}{\Leftrightarrow}
\newcommand{\af}{\alpha}
\newcommand{\bt}{\beta}
\newcommand{\Dl}{\Delta}
\newcommand{\lb}{\lambda}
\newcommand{\vf}{\varphi}
\newcommand{\B}{\mathcal{B}}
\newcommand{\C}{\mathbb{C}}
\newcommand{\Z}{\mathbb{Z}}
\newcommand{\LL}{\mathfrak{L}}
\newcommand{\sst}{\subseteq}
\newcommand{\id}{\mathrm{id}}
\newcommand{\ch}{\mathrm{char}}
\newcommand{\pr}[2]{\langle #1,#2 \rangle}
\newcommand{\ot}{\otimes}
\begin{document}

	\noindent{\Large  
		Transposed Poisson structures on \\generalized Witt algebras and Block Lie algebras}\footnote{
		The work was supported by  
		FCT   UIDB/MAT/00212/2020, UIDP/MAT/00212/2020, 2022.02474.PTDC and by CMUP, member of LASI, which is financed by national funds through FCT --- Fundação para a Ciência e a Tecnologia, I.P., under the project with reference UIDB/00144/2020.} 
	\footnote{Corresponding author: kaygorodov.ivan@gmail.com}
	
	\bigskip

	{\bf
		Ivan Kaygorodov\footnote{CMA-UBI, Universidade da Beira Interior, Covilh\~{a}, Portugal; \    kaygorodov.ivan@gmail.com}	\&
		Mykola Khrypchenko\footnote{ Departamento de Matem\'atica, Universidade Federal de Santa Catarina,     Brazil; \and CMUP, Departamento de Matemática, Faculdade de Ciências, Universidade do Porto,
			Rua do Campo Alegre s/n, 4169--007 Porto, Portugal\ nskhripchenko@gmail.com}
	}
	\

	\bigskip
 
	\ 
	
	\noindent {\bf Abstract:} {\it 	
		We describe transposed Poisson structures on generalized Witt algebras $W(A,V, \langle \cdot,\cdot \rangle )$ and Block Lie algebras $L(A,g,f)$ over a field $F$ of characteristic zero, where $\langle \cdot,\cdot \rangle$ and $f$ are non-degenerate. More specifically, if $\dim(V)>1$, then all the transposed Poisson algebra structures on $W(A,V,\langle \cdot,\cdot \rangle)$ are trivial; and if $\dim(V)=1$, then such structures are, up to isomorphism, mutations of the group algebra structure on $FA$. The transposed Poisson algebra structures on $L(A,g,f)$ are in a one-to-one correspondence with commutative and associative multiplications defined on a complement of the square of $L(A,g,f)$ with values in the center of $L(A,g,f)$. In particular, all of them are  usual Poisson structures on $L(A,g,f)$. This generalizes earlier results about transposed Poisson structures on Block Lie algebras $\mathcal{B}(q)$.
	}

	\
	
	\noindent {\bf Keywords}: 
	{\it 	Transposed Poisson algebra, generalized Witt algebra, Block Lie algebra, $\delta$-derivation.
	}

	\noindent {\bf MSC2020}: primary 17A30; secondary 17B40, 17B61, 17B63.  
	

	\section*{Introduction} 
	Poisson algebras originated from the Poisson geometry in the 1970s and have shown their importance in several areas of mathematics and physics, such as Poisson manifolds, algebraic geometry, operads, quantization theory, quantum groups, and classical and quantum mechanics. One of the popular topics in the theory of Poisson algebras is the study of all possible Poisson algebra structures with fixed Lie or associative part~\cite{said2,YYZ07,jawo,kk21}.
	Recently, Bai, Bai, Guo, and Wu~\cite{bai20} have introduced a dual notion of the Poisson algebra, called \textit{transposed Poisson algebra}, by exchanging the roles of the two binary operations in the Leibniz rule defining the Poisson algebra. 
	They have shown that a transposed Poisson algebra defined this way not only shares common properties of a Poisson algebra, including the closedness under tensor products and the Koszul self-duality as an operad, but also admits a rich class of identities. More significantly, a transposed Poisson algebra naturally arises from a Novikov-Poisson algebra by taking the commutator Lie algebra of the Novikov algebra.
	Thanks to \cite{bfk22}, 
	any unital transposed Poisson algebra is
	a particular case of a ``contact bracket'' algebra 
	and a quasi-Poisson algebra.
	Later, in a recent paper by Ferreira, Kaygorodov, and  Lopatkin
	a relation between $\frac{1}{2}$-derivations of Lie algebras and 
	transposed Poisson algebras has been established \cite{FKL}. 	These ideas were used to describe all transposed Poisson structures 
	on  Witt and Virasoro algebras in  \cite{FKL};
	on   twisted Heisenberg-Virasoro,   Schr\"odinger-Virasoro  and  
	extended Schr\"odinger-Virasoro algebras in \cite{yh21};
	on   oscillator algebras in  \cite{bfk22};
        Witt type Lie algebras in \cite{kk23}.
	It was proved that each complex finite-dimensional solvable Lie algebra has a non-trivial transposed Poisson structure \cite{klv22}.
	The ${\rm Hom}$- and ${\rm BiHom}$-versions of transposed Poisson algebras and
	transposed Poisson bialgebras have been considered in \cite{hom, bihom}.	 The algebraic and geometric classification of $3$-dimensional transposed Poisson algebras is given in \cite{bfk23}.	
 For the list of actual open questions on transposed Poisson algebras see \cite{bfk22}.
	
	The first non-trivial example of a transposed Poisson algebra was constructed on the Witt algebra with the multiplication 
	law $[e_i,e_j]=(i-j)e_{i+j}$ for $i,j \in \mathbb  Z$ (see, \cite{FKL}).
	This attracted certain interest to the description of transposed Poisson structures on Lie algebras related to the Witt algebra.
	Thus, all transposed Poisson structures   on 
	the Virasoro algebra  \cite{FKL},
	Block type Lie algebras   and  Block type Lie superalgebras  \cite{kk22},
        Witt type Lie algebras \cite{kk23}
	have been described.
In the last years, the concept of Witt type and Block type Lie algebra has been enlarged and generalized by various authors, such as 
Kawamoto,  Osborn, Đoković, Zhao,  Xu,  Passman,  Jordan, etc. (see, for example, \cite{kk22,kk23} and references therein).
	In the present paper, we study transposed Poisson structures on the class of
	generalized Witt  algebras  defined by 	Đoković and Zhao in \cite{dz96}
 and Block  algebras defined by  Block in~\cite{block58}.
 We use the standard method of characterization of transposed Poisson algebra structures on a fixed Lie algebra $\LL$ based on the description of the space of $\frac{1}{2}$-derivations of $\LL$.
	 
	\medskip 

 Our work consists of two main parts.
	\cref{sec-TP-on-W} is devoted to a description of $\frac{1}{2}$-derivations and transposed Poisson structures on generalized Witt algebras $W(A,V,\pr\cdot\cdot)$, which result in  the following theorem.
 
		\begin{theoremA}[\cref{desc-TP-W}]
		Let $W(A,V,\pr\cdot\cdot)$ be a generalized Witt algebra with non-degenerate $\pr\cdot\cdot$  and $\mathrm{char}(F)=0.$
		\begin{enumerate}
			\item  If $\dim(V)>1$, then all the transposed Poisson algebra structures on $W(A,V,\pr\cdot\cdot)$ are trivial. 
			\item  If $\dim(V)=1$, say, $V=\spn_F\{v\}$, then the transposed Poisson algebra structures on $W(A,V,\pr\cdot\cdot)$ are exactly mutations of the product $(a\ot v)\cdot(b\ot v)=(a+b)\ot v$.
		\end{enumerate}
	\end{theoremA}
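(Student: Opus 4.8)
\emph{Plan of the proof.} The route is to pass through $\tfrac12$-derivations, following Ferreira, Kaygorodov and Lopatkin \cite{FKL}: writing $\LL:=W(A,V,\pr\cdot\cdot)$, a bilinear operation $\cdot$ on $\LL$ is a transposed Poisson product if and only if $(\LL,\cdot)$ is commutative and associative and, for every $z\in\LL$, the left multiplication $L_z\colon x\mapsto z\cdot x$ is a $\tfrac12$-derivation, i.e.\ $2\,z\cdot[x,y]=[z\cdot x,y]+[x,z\cdot y]$. The problem therefore splits into (a) determining the space $\Delta(\LL)$ of all $\tfrac12$-derivations of $\LL$, which is where essentially all the work lies, and (b) reading off which families $\{L_z\}_{z\in\LL}$ take values in $\Delta(\LL)$ and glue into a commutative associative product; step (b) is short.

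For (a) I would exploit the $A$-grading of $\LL$, whose grade-$a$ component is spanned by the elements $a\ot w$ with $w\in V$, and under which $[a\ot u,\,b\ot w]=(a+b)\ot(\pr{a}{u}\,w-\pr{b}{w}\,u)$ (up to the convention fixed for $\pr\cdot\cdot$). First I would reduce to homogeneous $\tfrac12$-derivations: a general $\varphi\in\Delta(\LL)$ decomposes as $\varphi=\sum_{c\in A}\varphi_c$, where $\varphi_c$ raises the grade by $c$, and each $\varphi_c$ is again a $\tfrac12$-derivation; such a $\varphi_c$ amounts to a family $(\phi_a)_{a\in A}$ of linear maps $\phi_a\colon V\to V$ with $\varphi_c(a\ot w)=(a+c)\ot\phi_a(w)$, and, comparing the defining identity grade by grade, becomes a system of relations among the $\phi_a$. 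The engine driving the solution of this system is the non-degeneracy of $\pr\cdot\cdot$ (together with $\ch(F)=0$, needed to divide by the integers that appear): it lets one select, for any required test, exponents $a\in A$ with $\pr{a}{u}$ prescribed and nonzero, and --- when $\dim V>1$ --- a second vector of $V$ paired nontrivially and $F$-independently of a given one, which is exactly what is needed to annihilate the off-diagonal part of the $\phi_a$. Carrying this out should yield: if $\dim V>1$, then $\Delta(\LL)=F\cdot\id$; and if $\dim V=1$, say $V=\spn_F\{v\}$, so that $\LL$ is identified with the group algebra $FA=\bigoplus_{a\in A}Ft^a$ (a Witt-type Lie algebra in the sense of \cite{kk23}) via $a\ot v\leftrightarrow t^a$, then $\Delta(\LL)$ consists exactly of the operators $L_\mu\colon x\mapsto\mu x$ of multiplication in $FA$ by a fixed $\mu\in FA$; the reverse inclusion is a one-line check using $\pr{a+c}{v}=\pr{a}{v}+\pr{c}{v}$.

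For (b), let $\cdot$ be a transposed Poisson product on $\LL$. If $\dim V>1$, then $L_z=\lb(z)\id$ for an $F$-linear functional $\lb\colon\LL\to F$, and commutativity forces $\lb(z)x=\lb(x)z$ for all $x,z\in\LL$; since $\dim\LL\ge\dim V\ge2$, evaluating on linearly independent $x,z$ gives $\lb\equiv0$, hence $\cdot=0$ is trivial, which is (i). If $\dim V=1$, then $L_z=L_{\mu(z)}$ for a unique $\mu(z)\in FA$, the assignment $z\mapsto\mu(z)$ is $F$-linear, and commutativity $\mu(z)x=\mu(x)z$ taken at $x=t^b$, $z=t^a$ gives $t^{-a}\mu(t^a)=t^{-b}\mu(t^b)=:\nu$, independent of the exponent; hence $z\cdot x=\mu(z)x=\nu zx$ for all $x,z$, i.e.\ $\cdot$ is the mutation of the group-algebra product by $\nu$, and $\nu=t^0\cdot t^0\in\LL$ has finite support automatically. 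Conversely, every such mutation is commutative and associative at once, and each of its left multiplications equals $L_{\nu z}\in\Delta(\LL)$, so it is a transposed Poisson structure; this is (ii).

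The one genuine obstacle is step (a), and within it the rigidity statement $\Delta(\LL)=F\cdot\id$ for $\dim V>1$. Since $A$ is an arbitrary abelian group its grading carries no order, so there is no highest-weight shortcut available; the elimination of the off-diagonal data in the $\phi_a$ must be driven purely by non-degeneracy of the pairing, and the bookkeeping has to be organized so as to respect the finite-support conditions throughout, $\LL$ being usually infinite-dimensional. Once $\Delta(\LL)$ is determined, everything in step (b) is purely formal.
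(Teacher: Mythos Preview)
Your plan is correct and follows essentially the same route as the paper: reduce to $\tfrac12$-derivations via \cite{FKL}, decompose along the $A$-grading into homogeneous components, and show that for $\dim V>1$ one has $\Delta(W)=F\cdot\id$ (whence all structures are trivial), while for $\dim V=1$ the $\tfrac12$-derivations are exactly the multiplications in $FA$. The paper carries out step~(a) via two lemmas (killing the nonzero-degree components, then pinning down the degree-zero part), using precisely the non-degeneracy arguments you anticipate; for $\dim V=1$ the paper shortcuts by identifying $W$ with a Witt type algebra $V(f)$ and citing \cite{kk23}, whereas you sketch the mutation argument directly---but this is the same computation packaged differently.
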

	
In \cref{sec-TP-Block} we study the same questions on Block  Lie algebras $L(A,g,f)$ and obtain the following result.

\begin{theoremB} [\cref{descr-TP-on-L-g=0,descr-TP-on-L-g-ne-0}]
Let $L(A,g,f)$ be a Block  Lie algebra with non-degenerate $f$ and $\mathrm{char}(F)=0.$ 
\begin{enumerate}
\item If $g=0,$ then there is only one non-trivial transposed Poisson algebra structure $\cdot$ on $L(A,0,f)$. It is given by $u_0 \cdot u_0=u_0.$

\item If $g\ne 0$
and $(g(a),h(a))\ne (0,-1)$ for all $a\in A$, then all the transposed Poisson algebra structures on $L(A,g,f)$ are trivial.

\item If $g\ne 0$
and there is $a\in A$, such that $(g(a),h(a))\ne (0,-1)$, then the  transposed Poisson algebra structures on $L(A,g,f)$ are usual Poisson algebra structures that are extensions by zero of commutative associative products $*$ on the complement $V=\spn_F\{u_a\mid g(a)=h(a)+2=0\}$ of $[L, L]$ with values in $Z(L)=\spn_F\{u_a\mid g(a)=h(a)+1=0\}$.
\end{enumerate}
\end{theoremB}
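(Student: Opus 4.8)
The plan is to reduce everything to a computation of $\frac{1}{2}$-derivations of $L(A,g,f)$, following the standard scheme used for generalized Witt algebras in \cref{sec-TP-on-W}: by the Ferreira--Kaygorodov--Lopatkin correspondence, a transposed Poisson structure $\cdot$ on a Lie algebra $\LL$ is built from a family of $\frac{1}{2}$-derivations $\varphi_x = x\cdot(-)$, subject to the commutativity and associativity constraints of the product, so the first task is to pin down $\frac{1}{2}\mathrm{Der}(L(A,g,f))$ completely. I would fix the natural basis $\{t^a, u_a \mid a\in A\}$ (or whatever notation \cref{sec-TP-Block} sets up) together with the bracket relations encoding $g$, $f$, and the derived function $h$, and write an arbitrary linear map $\varphi$ as unknown structure constants. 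Plugging $\varphi$ into the defining identity $2\varphi([x,y]) = [\varphi(x),y] + [x,\varphi(y)]$ on pairs of basis elements yields a system of linear equations in those constants, indexed by $A\times A$. The bulk of the work — and the main obstacle — is the case analysis forced by the arithmetic conditions on $g(a)$ and $h(a)$: the equations degenerate precisely when $g(a)=0$ together with $h(a)\in\{-1,-2\}$, which is exactly where $Z(L)$ and a complement of $[L,L]$ appear, so keeping track of which coefficients survive in each regime is the delicate bookkeeping step.

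Concretely, I expect the computation to show that any $\frac{1}{2}$-derivation splits as a multiplication operator by a fixed element plus a ``degenerate'' part supported on the exceptional indices. This immediately separates the three cases of the theorem. When $g=0$, the center is large and the bracket is ``as degenerate as possible''; here one should find that $\frac{1}{2}\mathrm{Der}(L(A,0,f))$ is big enough to support exactly one nontrivial transposed Poisson product, and a short direct check identifies it as $u_0\cdot u_0 = u_0$ (after rescaling), giving part (i) — essentially the same phenomenon already observed for $\mathcal{B}(q)$. When $g\ne 0$ but no index $a$ satisfies $(g(a),h(a)) = (0,-1)$ (I read the hypothesis of part (ii) this way, matching the statement of part (iii)), the exceptional locus is empty, every $\frac{1}{2}$-derivation is a scalar multiple of the identity, and then the commutativity/associativity constraints force the product to vanish, as in the $\dim(V)>1$ case of Theorem A; this is part (ii). In the remaining case (iii), the exceptional set is nonempty: the index set $\{a \mid g(a)=h(a)+2=0\}$ spans a complement $V$ of $[L,L]$ and $\{a \mid g(a)=h(a)+1=0\}$ spans $Z(L)$, and the surviving equations say exactly that the restriction of $\cdot$ to $V$ lands in $Z(L)$ and is otherwise unconstrained as a bilinear map.

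It then remains to turn ``bilinear with values in the center, vanishing off $V$'' into ``commutative associative, and a genuine Poisson structure.'' Commutativity is built into the transposed Poisson axioms, so nothing to do there. Associativity of $*$ is forced by the transposed Poisson associativity-type identities combined with the fact that $[L,L]\cdot L = 0$ (which kills all the bracket terms that would otherwise obstruct associativity); conversely, any commutative associative $*\colon V\times V\to Z(L)$, extended by zero, manifestly satisfies the transposed Poisson identities because every Leibniz-type term contains a bracket that annihilates the relevant factor. Finally, since $Z(L)\cdot L \subseteq Z(L)\cdot L = 0$ forces $[\LL,-]$ to be a derivation of $\cdot$ trivially, such a product is simultaneously an \emph{ordinary} Poisson structure, yielding the last assertion of (iii). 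I would present parts (ii) and (iii) together via \cref{descr-TP-on-L-g-ne-0} and part (i) as \cref{descr-TP-on-L-g=0}, each reduced to the corresponding $\frac{1}{2}$-derivation lemma proved just beforehand.
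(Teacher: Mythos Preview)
Your plan is essentially the paper's own route: exploit the $A$-grading of $L(A,g,f)$ to decompose an arbitrary $\tfrac{1}{2}$-derivation into homogeneous pieces $\varphi_a$, classify these completely (the paper does this via a sequence of lemmas culminating in $\Delta(L)=\spn_F\{\id,\alpha\}$ for $g=0$ and $\Delta(L)=\spn_F(\{\id\}\cup\{\alpha_{(a,b)}\})$ for $g\ne 0$), and then impose commutativity of $\cdot$ to kill the $\id$-coefficients and leave only the $Z(L)$-valued piece supported on the complement of $[L,L]$.

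A few factual points to tighten before you execute. First, the basis of $L(A,g,f)$ is just $\{u_a\}_{a\in A}$; there is no second family $t^a$. Second, when $g=0$ the center is \emph{not} large --- it is exactly $Fu_0$ (one-dimensional), and $\Delta(L)$ is only two-dimensional, spanned by $\id$ and the projection $\alpha$ onto $Fu_0$; the single nontrivial product $u_0\cdot u_0=u_0$ falls out of commutativity, not from any abundance of central elements. Third, in part (iii) associativity of $*$ is not ``forced'' by transposed-Poisson identities: $\cdot$ is associative by hypothesis (that is part of \cref{tpa}), and $*$ is simply its restriction, so associativity is inherited trivially; the nontrivial direction is the converse, where one checks that the extension by zero of any commutative associative $*:V\times V\to Z(L)$ satisfies \cref{Trans-Leibniz}, and this is immediate because both sides vanish (as you correctly note).
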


	\section{Definitions and preliminaries}\label{prelim}
	
	All the algebras below will be over a field $F$ of characteristic zero and all the linear maps will be $F$-linear, unless otherwise stated.

	\begin{defn}\label{tpa}
		Let ${\mathfrak L}$ be a vector space equipped with two nonzero bilinear operations $\cdot$ and $[\cdot,\cdot].$
		The triple $({\mathfrak L},\cdot,[\cdot,\cdot])$ is called a \textit{transposed Poisson algebra} if $({\mathfrak L},\cdot)$ is a commutative associative algebra and
		$({\mathfrak L},[\cdot,\cdot])$ is a Lie algebra that satisfies the following compatibility condition
		\begin{align}\label{Trans-Leibniz}
			2z\cdot [x,y]=[z\cdot x,y]+[x,z\cdot y].
		\end{align}
	\end{defn}
	
	Transposed Poisson algebras were first introduced in a paper by Bai, Bai, Guo, and Wu \cite{bai20}.
	
	\begin{defn}\label{tp-structures}
		Let $({\mathfrak L},[\cdot,\cdot])$ be a Lie algebra. A \textit{transposed Poisson algebra structure} on $({\mathfrak L},[\cdot,\cdot])$ is a commutative associative multiplication $\cdot$ on $\mathfrak L$ which makes $({\mathfrak L},\cdot,[\cdot,\cdot])$ a transposed Poisson algebra.
	\end{defn}

	\begin{defn}\label{12der}
		Let $({\mathfrak L}, [\cdot,\cdot])$ be an algebra and $\varphi:\mathfrak L\to\mathfrak L$ a linear map.
		Then $\varphi$ is a \textit{$\frac{1}{2}$-derivation} if it satisfies
		\begin{align}\label{vf(xy)=half(vf(x)y+xvf(y))}
			\varphi \big([x,y]\big)= \frac{1}{2} \big([\varphi(x),y]+ [x, \varphi(y)] \big).
		\end{align}
	\end{defn}
	Observe that $\frac{1}{2}$-derivations are a particular case of $\delta$-derivations introduced by Filippov in \cite{fil1}
	(see also \cite{z10} and references therein). The space of all $\frac{1}{2}$-derivations of an algebra $\mathfrak L$ will be denoted by $\Dl(\mathfrak L).$

	\cref{tpa,12der} immediately imply the following key Lemma.
	\begin{lem}\label{glavlem}
		Let $({\mathfrak L},[\cdot,\cdot])$ be a Lie algebra and $\cdot$ a new binary (bilinear) operation on ${\mathfrak L}$. Then $({\mathfrak L},\cdot,[\cdot,\cdot])$ is a transposed Poisson algebra 
		if and only if $\cdot$ is commutative and associative and for every $z\in{\mathfrak L}$ the multiplication by $z$ in $({\mathfrak L},\cdot)$ is a $\frac{1}{2}$-derivation of $({\mathfrak L}, [\cdot,\cdot]).$
	\end{lem}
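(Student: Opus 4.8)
The plan is to unwind Definitions \ref{tpa} and \ref{12der} and observe that the transposed Leibniz identity \eqref{Trans-Leibniz}, read with $z$ held fixed, is literally the defining identity of a $\frac{1}{2}$-derivation applied to the operator of multiplication by $z$; so the whole argument is a short bookkeeping of quantifiers, with no genuine computation involved.

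First I would treat the forward implication. Assuming $(\mathfrak L,\cdot,[\cdot,\cdot])$ is a transposed Poisson algebra, Definition \ref{tpa} already supplies that $(\mathfrak L,\cdot)$ is commutative and associative, so only the $\frac{1}{2}$-derivation claim remains. Fix $z\in\mathfrak L$ and let $\varphi\colon\mathfrak L\to\mathfrak L$ be the map $x\mapsto z\cdot x$. Then identity \eqref{Trans-Leibniz} rewrites, for all $x,y\in\mathfrak L$, as
\[
\varphi([x,y])=z\cdot[x,y]=\frac{1}{2}\bigl([z\cdot x,y]+[x,z\cdot y]\bigr)=\frac{1}{2}\bigl([\varphi(x),y]+[x,\varphi(y)]\bigr),
\]
which is precisely \eqref{vf(xy)=half(vf(x)y+xvf(y))}; hence $\varphi\in\Dl(\mathfrak L)$.

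For the converse I would start from a commutative and associative operation $\cdot$ such that, for every $z\in\mathfrak L$, the map $\varphi_z\colon x\mapsto z\cdot x$ lies in $\Dl(\mathfrak L)$. Then $(\mathfrak L,\cdot)$ is a commutative associative algebra and $(\mathfrak L,[\cdot,\cdot])$ is a Lie algebra by hypothesis, so by Definition \ref{tpa} it only remains to verify \eqref{Trans-Leibniz}; this follows by reading the $\frac{1}{2}$-derivation identity \eqref{vf(xy)=half(vf(x)y+xvf(y))} for $\varphi_z$ at an arbitrary triple $x,y,z$ and multiplying by $2$, i.e.\ by running the displayed chain of equalities in reverse.

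There is essentially no obstacle in this argument. The only point worth making explicit is that commutativity of $\cdot$ forces left and right multiplication by $z$ to coincide, so the phrase ``the multiplication by $z$'' in the statement is unambiguous and the single identity \eqref{Trans-Leibniz} simultaneously encodes both one-sided Leibniz-type rules; this is exactly what makes the correspondence with $\frac{1}{2}$-derivations (rather than with some pair of one-sided derivations) the appropriate one, and it is what will be exploited throughout \cref{sec-TP-on-W,sec-TP-Block}.
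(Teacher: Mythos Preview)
Your proof is correct and matches the paper's approach: the paper does not give a written proof but simply states that the lemma follows immediately from \cref{tpa,12der}, which is exactly the definition-unwinding you carry out.
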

	
	The basic example of a $\frac{1}{2}$-derivation is the multiplication by a field element.
	Such $\frac{1}{2}$-derivations will be called \textit{trivial}.
	
	\begin{thrm}\label{princth}
		Let ${\mathfrak L}$ be a Lie algebra without non-trivial $\frac{1}{2}$-derivations.
		Then all transposed Poisson algebra structures on ${\mathfrak L}$ are trivial.
	\end{thrm}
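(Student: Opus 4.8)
The plan is to read off the statement directly from \cref{glavlem} together with commutativity of the product. First I would take an arbitrary transposed Poisson algebra structure $\cdot$ on $\mathfrak L$. By \cref{glavlem}, for every $z\in\mathfrak L$ the operator $L_z\colon x\mapsto z\cdot x$ is a $\frac12$-derivation of $(\mathfrak L,[\cdot,\cdot])$, and by hypothesis every $\frac12$-derivation of $\mathfrak L$ is trivial, so $L_z=\lambda(z)\,\id$ for some scalar $\lambda(z)\in F$; equivalently, $z\cdot x=\lambda(z)\,x$ for all $x,z\in\mathfrak L$.

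Next I would exploit that $\cdot$ is commutative: for all $x,z\in\mathfrak L$,
\[
\lambda(z)\,x=z\cdot x=x\cdot z=\lambda(x)\,z .
\]
If some $z$ had $\lambda(z)\ne 0$, this identity would force $x\in Fz$ for every $x$, i.e. $\dim\mathfrak L\le 1$; but a Lie algebra carrying a transposed Poisson structure has non-zero bracket, hence $\dim\mathfrak L\ge 2$. Therefore $\lambda\equiv 0$, so $z\cdot x=0$ for all $x,z\in\mathfrak L$, which is exactly the assertion that the transposed Poisson algebra structure $\cdot$ is trivial.

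There is essentially no obstacle here: the whole argument is a two-line consequence of \cref{glavlem}, the point being that a left-multiplication operator which is forced to be a scalar, combined with commutativity of $\cdot$, leaves no room for a non-zero product. The only place that needs a moment's thought is ruling out the degenerate case $\dim\mathfrak L\le 1$, and this is automatic because \cref{tpa} requires the Lie bracket of $\mathfrak L$ to be non-zero.
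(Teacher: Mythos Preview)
Your proof is correct and is precisely the natural argument one would write down from \cref{glavlem}. The paper itself does not supply a proof of \cref{princth}; it states the result and later (in the proof of \cref{desc-TP-W}) cites it as \cite[Theorem~8]{FKL}, so there is no alternative argument in the paper to compare against. Your handling of the only delicate point---ruling out $\dim\mathfrak L\le 1$ via the requirement in \cref{tpa} that $[\cdot,\cdot]$ be nonzero---is exactly right.
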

	
	Given a Lie algebra $(\LL,[\cdot,\cdot])$ denote by $Z(\LL)$ its \textit{center}, i.e. $Z(\LL)=\{a\in\LL\mid[a,b]=0,\ \forall b\in\LL\}$, and by $[\LL,\LL]$ its square, i.e. $[\LL,\LL]=\spn_F\{[a,b]\mid a,b\in\LL\}$. Fix a complement $V$ of $[\LL,\LL]$ in $\LL$. Then any commutative associative product $*:V\times V\to Z(\LL)$ defines a transposed Poisson algebra structure $\cdot$ on $\LL$ by means of
	\begin{align}\label{(a_1+a_2)-times-(b_1+b_2)}
		(a_1+a_2)\cdot(b_1+b_2)=a_1*b_1,
	\end{align}
	where $a_1,b_1\in V$ and $a_2,b_2\in [\LL,\LL]$. Indeed, the right-hand side of \cref{Trans-Leibniz} is zero, because $z\cdot x,z\cdot y\in Z(\LL)$, and the left-hand side of \cref{Trans-Leibniz} is zero by \cref{(a_1+a_2)-times-(b_1+b_2)}, because $[x,y]\in[\LL,\LL]$. We say that $\cdot$ is \textit{the extension by zero} of $*$. Observe that $\cdot$ is at the same time a usual Poisson structure on $(\LL,[\cdot,\cdot])$.
	

	

	\section{Transposed Poisson structures on generalized Witt algebras}\label{sec-TP-on-W}

\subsection{Generalized Witt algebras}\label{sec-gen-witt}
	
	Đoković and Zhao~\cite{dz96} introduced the following generalization of the classical Witt algebra. 
	\begin{defn}\label{defn-W(A_V_<>)}
		Let $F$ be a field, $(A,+)$ a non-trivial abelian group, $V\ne\{0\}$ a vector space and $\pr\cdot\cdot:V\times A\to F$ a map linear in the first variable and additive in the second one. Denote $W:=FA\otimes_F V$ and define the product $[\cdot,\cdot]$ on $W$ by means of
		\begin{align}\label{[a-tens-b_b-tens-w]=(a+b)-tens-(<v_b>w-<w_a>v)}
			[a\otimes v,b\otimes w]=(a+b)\otimes(\pr vb w-\pr wa v).
		\end{align}
	   Then $(W,[\cdot,\cdot])$ is a Lie algebra called a \textit{generalized Witt algebra}.
	\end{defn}
	When it is necessary to specify $A$, $V$ and $\pr\cdot\cdot$, one writes $W=W(A,V,\pr\cdot\cdot)$. We assume that $\pr\cdot\cdot$ is \textit{non-degenerate}, i.e.
	\begin{align}
		\pr Va=\{0\}&\iff a=0.\label{<a_V>=0-iff-a=0}
	\end{align}
	We also assume that $\mathrm{char}(F)=0$. Then it follows from \cref{<a_V>=0-iff-a=0} that $A$ is torsion-free.
	
	The algebra $W(A,V,\pr\cdot\cdot)$ is a generalization of the so-called \textit{Witt type Lie algebra} $V(f)$ (corresponding to an additive map $f$) introduced\footnote{Notice that `$V$' in $V(f)$ is not the same space $V$ from \cref{defn-W(A_V_<>)}.} by Yu in~\cite{Yu97}. We recall its definition using the notation from the present paper. Given an abelian group $A$, a field $F$ and a function $f:A\to F$, define $V(f)$ to be a vector space with basis $\{e_a\}_{a\in A}$ and multiplication
	\begin{align}\label{[e_a_e_b]=(f(b)-f(a))e_(a+b)}
		[e_a,e_b]=(f(b)-f(a))e_{a+b}.
	\end{align}
Without loss of generality, one assumes that $f(0)=0$. Then $V(f)$ is a Lie algebra if and only if
\begin{align*}
	\big(f(a+b)-f(a)-f(b)\big)\big(f(a)-f(b)\big)=0
\end{align*}
for all $a,b\in A$. Observe that in general one does not require that $f$ be additive. However, it turns out to be so if $|f(A)|\ge 4$ by \cite[Lemma 4.6]{Yu97}.

	\begin{lem}\label{W-cong-V(f)}
		Let $\dim(V)=1$ and  $\pr\cdot\cdot$ be non-degenerate. Then $W(A,V,\pr\cdot\cdot)$ is isomorphic to the Witt type Lie algebra $V(f)$ for some additive injective $f:A\to F$ with $|f(A)|=\infty$.
	\end{lem}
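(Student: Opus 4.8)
The plan is to exhibit an explicit isomorphism. Write $V=\spn_F\{v\}$, so that $\{a\ot v\mid a\in A\}$ is a basis of $W=W(A,V,\pr\cdot\cdot)$, and let $\vf\colon W\to V(f)$ be the linear bijection determined by $\vf(a\ot v)=e_a$. Define $f\colon A\to F$ by $f(a):=\pr va$. Since $\pr\cdot\cdot$ is additive in its second argument, $f$ is additive, and in particular $f(0)=0$, which matches the normalization in the definition of $V(f)$.

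First I would compute the bracket on basis elements using \cref{[a-tens-b_b-tens-w]=(a+b)-tens-(<v_b>w-<w_a>v)}:
\begin{align*}
[a\ot v,b\ot v]=(a+b)\ot(\pr vb v-\pr va v)=\big(f(b)-f(a)\big)\,(a+b)\ot v.
\end{align*}
Applying $\vf$ and comparing with \cref{[e_a_e_b]=(f(b)-f(a))e_(a+b)} gives $\vf([a\ot v,b\ot v])=\big(f(b)-f(a)\big)e_{a+b}=[\vf(a\ot v),\vf(b\ot v)]$, so $\vf$ is an isomorphism of Lie algebras $W(A,V,\pr\cdot\cdot)\cong V(f)$. (This also re-proves that $V(f)$ is a Lie algebra here, consistently with the defining relation for $V(f)$ being automatically satisfied because $f$ is additive.)

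It then remains to verify the claimed properties of $f$. Since $\dim(V)=1$, for any $a\in A$ one has $\pr Va=\{0\}$ if and only if $\pr va=0$, i.e.\ $f(a)=0$; hence by the non-degeneracy assumption \cref{<a_V>=0-iff-a=0} the group homomorphism $f$ has trivial kernel and is therefore injective. Finally, $A$ is non-trivial by hypothesis and torsion-free (as observed right after \cref{<a_V>=0-iff-a=0}), hence infinite; injectivity of $f$ then yields $|f(A)|=|A|=\infty$.

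The argument is essentially bookkeeping, so I do not expect a genuine obstacle; the only point worth isolating is that in dimension one the non-degeneracy of $\pr\cdot\cdot$ says \emph{precisely} that $f=\pr v{\,\cdot\,}$ has trivial kernel, which combined with additivity gives injectivity, and that torsion-freeness is what upgrades ``$A$ non-trivial'' to ``$f(A)$ infinite''.
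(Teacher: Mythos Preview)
Your proof is correct and follows essentially the same approach as the paper's own argument: fix a basis vector $v$ of $V$, set $f(a)=\pr va$, send $a\ot v\mapsto e_a$, and verify the bracket preservation directly from \cref{[a-tens-b_b-tens-w]=(a+b)-tens-(<v_b>w-<w_a>v)} and \cref{[e_a_e_b]=(f(b)-f(a))e_(a+b)}; injectivity of $f$ from non-degeneracy and $|f(A)|=\infty$ from torsion-freeness of $A$ are handled identically. Your write-up even makes a couple of points (the normalization $f(0)=0$ and the automatic Lie condition for additive $f$) slightly more explicit than the paper does.
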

	\begin{proof}
		Choose $v\in V\setminus\{0\}$. Then $\{a\ot v\mid a\in A\}$ is a basis of $W$. Define an additive map $f:A\to F$ by $f(a)=\pr va$ and a bijective linear map $\vf:W\to V(f)$ by $\vf(a\ot v)=e_a$. Then by \cref{[e_a_e_b]=(f(b)-f(a))e_(a+b),[a-tens-b_b-tens-w]=(a+b)-tens-(<v_b>w-<w_a>v)}
		\begin{align*}
			[\vf(a\ot v),\vf(b\ot v)]&=[e_a,e_b]=(f(b)-f(a))e_{a+b}=(\pr vb-\pr va)e_{a+b}\\
			&=(\pr vb-\pr va)\vf((a+b)\ot v)=\vf([a\otimes v,b\otimes v]).
		\end{align*}
	Observe that $f$ is injective by \cref{<a_V>=0-iff-a=0}, because $\pr va=0\iff \pr Va=\{0\}$. Since $A$ is torsion-free, then $|A|=\infty$, whence $|f(A)|=\infty$ as well.
	\end{proof}

Since transposed Poisson structures on $V(f)$ were described in~\cite{kk23}, we only need to deal with the case $\dim(V)>1$.

	\begin{lem}\label{<v'_a>_<v''_a>-ne-0}
		Let $\dim(V)>1$. If $a\ne 0$, then there exist two linearly independent $v',v''\in V$ such that $\pr{v'}a\ne 0\ne \pr{v''}a$.
	\end{lem}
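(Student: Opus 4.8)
The plan is to argue by elementary linear algebra on the functional $\pr\cdot a$. First I would invoke non-degeneracy of $\pr\cdot\cdot$ in the form \cref{<a_V>=0-iff-a=0}: since $a\ne 0$, we have $\pr Va\ne\{0\}$, so there is some $v'\in V$ with $\pr{v'}a\ne 0$. Equivalently, the linear functional $\vf_a\colon V\to F$, $\vf_a(v)=\pr va$, is nonzero, hence its kernel $H=\ker\vf_a$ is a subspace of codimension one in $V$.

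Next I would use the hypothesis $\dim(V)>1$: it forces $\dim(H)=\dim(V)-1\ge 1$, so we may pick a nonzero vector $w\in H$. Now set $v'':=v'+w$. Then $\pr{v''}a=\pr{v'}a+\pr wa=\pr{v'}a+0=\pr{v'}a\ne 0$, so both $v'$ and $v''$ pair non-trivially with $a$. Finally I would check linear independence of $v'$ and $v''$: since $w\ne 0$, independence of $\{v',v''\}$ is equivalent to independence of $\{v',w\}$; and the latter holds because $w\in H$ while $v'\notin H$ (any nontrivial linear relation $\af v'+\bt w=0$ would give, after applying $\vf_a$, that $\af\,\pr{v'}a=0$, hence $\af=0$, hence $\bt w=0$, hence $\bt=0$).

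I do not expect any real obstacle here; the only point that needs a moment's care is the case $\dim(V)=2$, where $H$ is merely a line, so that there is essentially a unique choice of $w$ up to scalar — but one nonzero $w$ is all the argument requires, so this causes no difficulty.
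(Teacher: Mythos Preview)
Your proof is correct and follows essentially the same route as the paper's: pick $v'$ with $\pr{v'}a\ne 0$, take a nonzero $w$ in the kernel of $\pr\cdot a$, and set $v''=v'+w$. The paper additionally splits off the case $V_0=\{0\}$ as ``nothing to prove'', but as your codimension argument shows, that case is in fact vacuous once $\dim(V)>1$.
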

	\begin{proof}
		Denote $V_0=\{v\in V\mid \pr va=0\}$. If $V_0=\{0\}$, there is nothing to prove. Otherwise, choose $0\ne v_0\in V_0$. Since $a\ne 0$, by \cref{<a_V>=0-iff-a=0} there is $v'\in V$ such that $\pr{v'}a\ne 0$. Observe that $v'$ and $v_0$ are linearly independent, since otherwise $v'=kv_0$ and $\pr{v'}a=k\pr{v_0}a=0$. Then $v'$ and $v'+v_0$ are also linearly independent and $\pr{v'+v_0}a=\pr{v'}a\ne 0$. So, we may choose $v''=v'+v_0$.
	\end{proof}

\subsection{$\frac{1}{2}$-derivations of generalized Witt algebras}
 Observe that $W$ is an $A$-graded algebra, namely
	\begin{align*}
		W=\bigoplus_{a\in A} W_a, \text{ where }W_a=a\otimes V=\{a\otimes v\mid v\in V\}.
	\end{align*} 
	For all $a\in A$ and $v\in V$ denote, for simplicity, 
	\begin{align}\label{v_a=a-ot-v}
		v_a:=a\otimes v.
	\end{align}
Any linear map $\vf:W\to W$ decomposes as
	\begin{align*}
		\vf=\sum_{a\in A}\vf_a,
	\end{align*}
	where $\vf_a:W\to W$ is a linear map such that $\vf_a(W_b)\sst W_{a+b}$ for all $b\in A$. In particular, $\vf\in\Dl(W)$ if and only if $\vf_a\in\Dl(W)$ for all $a\in A$. We write 
	\begin{align}\label{dd_a(b-tn-v)=(a+b)-tn-d_a(b-tn-v)}
		\vf_a(v_b)=d_a(v_b)_{a+b},
	\end{align}
	where $d_a:W\to V$.
	
	\begin{lem}\label{conditions-on-d}
		Let $\vf_a:W\to W$ be a linear map satisfying \cref{dd_a(b-tn-v)=(a+b)-tn-d_a(b-tn-v)}. Then $\vf_a\in \Dl (W)$ if and only if for all $x,y\in A$ and $v,w\in V$
		\begin{align} 
&			 2d_a(\pr vy w_{x+y}-\pr wx v_{x+y})=\notag \\
& \quad  \pr {d_a(v_x)}y w-\pr w{a+x} d_a(v_x)
			+\pr v{a+y} d_a(w_y)-\pr {d_a(w_y)}x v.\label{half-der-in-terms-of-d_af}
		\end{align}
	\end{lem}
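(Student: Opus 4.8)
The plan is to reduce the defining identity \cref{vf(xy)=half(vf(x)y+xvf(y))} for $\vf_a$ to a pointwise condition on $d_a$ by testing it on the homogeneous spanning set $\{v_b\mid b\in A,\ v\in V\}$ of $W$ and then using the $A$-grading to strip off the common degree.

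First, note that $d_a$ is linear, being (on each $W_b$) the composition of $\vf_a\colon W_b\to W_{a+b}$ with the canonical isomorphism $W_{a+b}=(a+b)\ot V\cong V$, extended by linearity to all of $W$. Both sides of \cref{vf(xy)=half(vf(x)y+xvf(y))} are bilinear in their two arguments (the bracket is bilinear and $\vf_a$ is linear), so it suffices to verify \cref{vf(xy)=half(vf(x)y+xvf(y))} with arguments $v_x$ and $w_y$ for $x,y\in A$, $v,w\in V$. Using \cref{[a-tens-b_b-tens-w]=(a+b)-tens-(<v_b>w-<w_a>v)} together with \cref{dd_a(b-tn-v)=(a+b)-tn-d_a(b-tn-v)} and the linearity of $d_a$, I would compute
\begin{align*}
\vf_a([v_x,w_y]) &= d_a\big(\pr vy w_{x+y}-\pr wx v_{x+y}\big)_{a+x+y},\\
[\vf_a(v_x),w_y] &= \big(\pr{d_a(v_x)}y w-\pr w{a+x}d_a(v_x)\big)_{a+x+y},\\
[v_x,\vf_a(w_y)] &= \big(\pr v{a+y}d_a(w_y)-\pr{d_a(w_y)}x v\big)_{a+x+y}.
\end{align*}
All three elements lie in the single graded component $W_{a+x+y}=(a+x+y)\ot V$, and $v\mapsto (a+x+y)\ot v$ is a linear isomorphism $V\to W_{a+x+y}$. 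Hence \cref{vf(xy)=half(vf(x)y+xvf(y))} for the pair $(v_x,w_y)$ holds if and only if the corresponding elements of $V$ coincide, and after clearing the factor $\frac12$ this is exactly \cref{half-der-in-terms-of-d_af}. Since each of these steps is an equivalence, the converse is immediate: if \cref{half-der-in-terms-of-d_af} holds for all $x,y,v,w$, then \cref{vf(xy)=half(vf(x)y+xvf(y))} holds on all pairs $(v_x,w_y)$, hence on $W\times W$ by bilinearity, i.e. $\vf_a\in\Dl(W)$.

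This is in essence a direct verification; the only point requiring care is the index bookkeeping. One must keep track of the fact that $\vf_a$ shifts the grading by $a$, so $\vf_a(v_x)\in W_{a+x}$, and accordingly the group element feeding into $\pr\cdot\cdot$ in the brackets $[\vf_a(v_x),w_y]$ and $[v_x,\vf_a(w_y)]$ is $a+x$ (respectively $a+y$) rather than $x$ (respectively $y$). No separate difficulty beyond this is anticipated.
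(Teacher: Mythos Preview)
Your proposal is correct and takes essentially the same approach as the paper: both compute $2\vf_a([v_x,w_y])$ and $[\vf_a(v_x),w_y]+[v_x,\vf_a(w_y)]$ directly from \cref{[a-tens-b_b-tens-w]=(a+b)-tens-(<v_b>w-<w_a>v),dd_a(b-tn-v)=(a+b)-tn-d_a(b-tn-v)} and compare the $V$-components in $W_{a+x+y}$. Your version is slightly more explicit about why the reduction to homogeneous elements suffices (bilinearity, linearity of $d_a$, the grading isomorphism), but the substance is identical.
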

	\begin{proof}
		By \cref{[a-tens-b_b-tens-w]=(a+b)-tens-(<v_b>w-<w_a>v),dd_a(b-tn-v)=(a+b)-tn-d_a(b-tn-v),v_a=a-ot-v} we have
		\begin{align*}
			2\vf_a([v_x,w_y])&=2\vf_a(\pr vy w_{x+y}-\pr wx v_{x+y})=2d_a(\pr vy w_{x+y}-\pr wx v_{x+y})_{a+x+y}
		\end{align*}
		and
		\begin{align*}
			[\vf_a(v_x),w_y]+[v_x,\vf_a(w_y)]&=[d_a(v_x)_{a+x},w_y]+[v_x,d_a(w_y)_{a+y}]\\
			&=\pr {d_a(v_x)}y w_{a+x+y}-\pr w{a+x} d_a(v_x)_{a+x+y}\\
			&\quad+\pr v{a+y} d_a(w_y)_{a+x+y}-\pr {d_a(w_y)}x v_{a+x+y}.
		\end{align*}
	\end{proof}
	
	\begin{lem}\label{d_a=zero}
		Let  $\dim(V)>1$, $a\ne 0$ and $\vf_a\in\Dl(W)$ satisfying \cref{dd_a(b-tn-v)=(a+b)-tn-d_a(b-tn-v)}. Then $\vf_a=0$.
	\end{lem}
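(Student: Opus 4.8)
The plan is to prove $\vf_a=0$ by showing that the function $d_a$ of \cref{conditions-on-d} vanishes, via successive specializations of the identity \eqref{half-der-in-terms-of-d_af}. First I would set $y=0$ there; using that $\pr\cdot\cdot$ is additive in the second variable (so $\pr v0=0$ and $\pr w{a+x}=\pr wa+\pr wx$), the identity collapses to
\begin{align*}
(\pr wa-\pr wx)\,d_a(v_x)=\pr va\,d_a(w_0)-\pr{d_a(w_0)}x\,v,\qquad v,w\in V,\ x\in A.
\end{align*}
Putting $x=0$ gives $\pr wa\,d_a(v_0)=\pr va\,d_a(w_0)$; since $a\ne0$, non-degeneracy \cref{<a_V>=0-iff-a=0} yields $v_*\in V$ with $\pr{v_*}a\ne0$, and fixing $w=v_*$ shows $d_a(v_0)=\pr va\,c_0$ for all $v$, where $c_0:=\pr{v_*}a\m d_a((v_*)_0)\in V$.

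Next I would put $x=a$ in the displayed identity: the left-hand side vanishes, and after substituting $d_a(w_0)=\pr wa\,c_0$ and cancelling a nonzero $\pr wa$, one is left with $\pr va\,c_0=\pr{c_0}a\,v$ for all $v\in V$. This is the step where the hypothesis $\dim V>1$ is essential: if $c_0\ne0$, choosing $v$ linearly independent from $c_0$ forces $\pr va=0$ and $\pr{c_0}a=0$, and then the relation for arbitrary $v$ (together with non-degeneracy) forces $c_0=0$, a contradiction. Hence $c_0=0$, i.e.\ $d_a(v_0)=0$ for all $v$. Feeding this back into the displayed identity gives $(\pr wa-\pr wx)\,d_a(v_x)=0$; for any $x\ne a$ we have $a-x\ne0$, so some $w$ has $\pr w{a-x}\ne0$, whence $d_a(v_x)=0$ for all $v$ and all $x\ne a$.

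It remains to kill $d_a$ on the component $W_a$. Here I would use \eqref{half-der-in-terms-of-d_af} with $x+y=a$: choose $x_0\in A$ with $x_0\notin\{0,a\}$ and $2x_0\ne a$ (such an element exists because $A$ is torsion-free, hence infinite, and there are at most three forbidden values), and set $y_0=a-x_0$, so also $y_0\notin\{0,a\}$ and $y_0-x_0=a-2x_0\ne0$. Since $x_0\ne a$ and $y_0\ne a$, the previous paragraph makes $d_a(v_{x_0})$ and $d_a(w_{y_0})$ vanish, so the identity reduces to $\pr v{y_0}\,d_a(w_a)=\pr w{x_0}\,d_a(v_a)$ for all $v,w\in V$. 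Fixing $w$ with $\pr w{x_0}\ne0$ gives $d_a(v_a)=\pr v{y_0}\,c_a$ for some $c_a\in V$; substituting back yields $\pr v{y_0}\pr w{a-2x_0}\,c_a=0$, and since $y_0\ne0\ne a-2x_0$ one can make both pairings nonzero, so $c_a=0$ and $d_a(v_a)=0$. Combining the two cases, $d_a(v_x)=0$ for all $v\in V$, $x\in A$, hence $\vf_a=0$ by \eqref{dd_a(b-tn-v)=(a+b)-tn-d_a(b-tn-v)}.

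I expect the only genuinely delicate points to be (a) the use of $\dim V>1$ in the $x=a$ step — it cannot be removed, since for $\dim V=1$ one recovers the Witt type algebra $V(f)$, which does carry non-trivial $\frac12$-derivations — and (b) the elementary observation that $A$ contains an element $x_0\notin\{0,a\}$ with $2x_0\ne a$; the rest is routine bookkeeping with the bilinearity of $[\cdot,\cdot]$ and the additivity of $\pr\cdot\cdot$.
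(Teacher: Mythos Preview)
Your proof is correct and follows essentially the same strategy as the paper: set $y=0$ in \eqref{half-der-in-terms-of-d_af} to kill $d_a$ on $W_0$ and then on all $W_x$ with $x\ne a$, and finish by a second specialization to kill $d_a$ on $W_a$. The only cosmetic differences are that the paper invokes \cref{<v'_a>_<v''_a>-ne-0} where you argue linear independence directly, and that the paper's final substitution is $(x,y)=(a,-a)$ rather than your choice $x+y=a$ with $x\notin\{0,a\}$, $2x\ne a$; both work equally well.
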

	\begin{proof}
		Substitute $y=0$ into \cref{half-der-in-terms-of-d_af}:
		\begin{align*}
			-2\pr wx d_a(v_x)=-\pr w{a+x} d_a(v_x)+\pr va d_a(w_0)-\pr {d_a(w_0)}x v,
		\end{align*}
	that is
	\begin{align}\label{<w_a-b>d_a(v_b)=<v_a>d_a(w_0)-<d_a(w_0)_b v>}
		\pr w{a-x} d_a(v_x)=\pr va d_a(w_0)-\pr {d_a(w_0)}x v.
	\end{align}
Then setting $x=a$ in \cref{<w_a-b>d_a(v_b)=<v_a>d_a(w_0)-<d_a(w_0)_b v>} we obtain
\begin{align}\label{<v_a>d_a(w_0)=<d_a(w_0)_a>v}
	\pr va d_a(w_0)=\pr {d_a(w_0)}a v.
\end{align}
By \cref{<v'_a>_<v''_a>-ne-0} there are two linearly independent $v',v''\in V$ such that $\pr{v'}a,\pr{v''}a\ne 0$. Choosing consecutively $v=v'$ and $v=v''$ in \cref{<v_a>d_a(w_0)=<d_a(w_0)_a>v} we have
\begin{align*}
	d_a(w_0)=\frac{\pr {d_a(w_0)}a}{\pr {v'}a} v'=\frac{\pr {d_a(w_0)}a}{\pr {v''}a} v''.
\end{align*}
By the linear independence of $v'$ and $v''$, 
\begin{align}\label{d_a(w_0)-zero}
	d_a(w_0)=0
\end{align}
for all $w\in V$. It follows from \cref{<w_a-b>d_a(v_b)=<v_a>d_a(w_0)-<d_a(w_0)_b v>} that
\begin{align*}
	\pr w{a-x} d_a(v_x)=0.
\end{align*}
If $x\ne a$, then $\pr w{a-x}\ne 0$ for some $w\in V$ by \cref{<a_V>=0-iff-a=0}. Thus,
\begin{align}\label{d_a(v_b)=0-for-b-ne-a}
	d_a(v_x)=0,\text{ if }x\ne a.
\end{align}
Now substitute $x=a$ and $y=-a$ into \cref{half-der-in-terms-of-d_af} and use \cref{d_a(w_0)-zero}:
\begin{align*}
0=\pr {d_a(v_a)}a w+2\pr wa d_a(v_a)+\pr {d_a(w_{-a})}a v.
\end{align*}
Since $A$ is torsion-free, then $a\ne -a$, so $d_a(w_{-a})=0$ by \cref{d_a(v_b)=0-for-b-ne-a}.
Taking consecutively $w=v'$ and $w=v''$ we have
\begin{align*}
	d_a(v_a)=-\frac{\pr {d_a(v_a)}a}{2\pr {v'}a} v'=-\frac{\pr {d_a(v_a)}a}{2\pr {v''}a} v'',
\end{align*}
whence 
\begin{align}\label{d_a(v_a)-zero}
	d_a(v_a)=0
\end{align}
by the linear independence of $v'$ and $v''$. Combining \cref{d_a(v_b)=0-for-b-ne-a,d_a(v_a)-zero}, we conclude that $\vf_a=0$.
	\end{proof}
	
\begin{lem}\label{d_gm(af)=d_gm(0)}
	Let $\vf_0\in\Dl(W)$ satisfying \cref{dd_a(b-tn-v)=(a+b)-tn-d_a(b-tn-v)} with $a=0$. Then $\vf_0\in\spn_F\{\id\}$.
\end{lem}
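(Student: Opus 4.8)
The plan is to specialize the compatibility identity \cref{half-der-in-terms-of-d_af} of \cref{conditions-on-d} to $a=0$. Since $\vf_0$ preserves each graded component $W_b$, for every $b\in A$ the rule $v\mapsto d_0(v_b)$ defines a linear map $\psi_b\in\mathrm{End}(V)$ with $\vf_0(v_b)=\psi_b(v)_b$, and, using linearity of $d_0$ in its first argument, \cref{half-der-in-terms-of-d_af} becomes
\begin{align*}
2\pr vy\psi_{x+y}(w)-2\pr wx\psi_{x+y}(v)=\pr{\psi_x(v)}y\,w-\pr wx\psi_x(v)+\pr vy\psi_y(w)-\pr{\psi_y(w)}x\,v
\end{align*}
for all $x,y\in A$ and $v,w\in V$. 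Proving $\vf_0\in\spn_F\{\id\}$ is then equivalent to showing $\psi_b=c\cdot\id_V$ for all $b\in A$ (in particular for $b=0$, which is a genuine extra case since $W_0\cong V$ is nonzero) with one and the same scalar $c\in F$.

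First I would set $y=0$. Using $\pr v0=0=\pr{\cdot}0$, the identity collapses to $\pr wx\psi_x(v)=\pr{\psi_0(w)}x\,v$ for all $v,w\in V$ and $x\in A$. Fixing $x\ne0$ and choosing, by non-degeneracy \cref{<a_V>=0-iff-a=0}, some $w$ with $\pr wx\ne0$, this shows $\psi_x=\mu(x)\cdot\id_V$ for a scalar $\mu(x)\in F$, and feeding this back in yields $\pr{\psi_0(w)}x=\mu(x)\pr wx$ for all $w\in V$. Next I would substitute $\psi_x=\mu(x)\cdot\id_V$ (valid on every nonzero degree) into the displayed identity taken with $x,y,x+y$ all nonzero: both sides become scalar multiples of the vector $\pr vy\,w-\pr wx\,v$, and since $\dim V>1$ one can choose $v$ with $\pr vy\ne0$ and $w$ linearly independent of $v$, making that vector nonzero. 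This produces the functional equation $2\mu(x+y)=\mu(x)+\mu(y)$ for all $x,y\in A$ with $x,y,x+y\ne0$.

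It then remains to solve this equation on the torsion-free abelian group $A$. Applying it to $(x,x)$ gives $\mu(2x)=\mu(x)$, then applying it to $(nx,x)$ an induction gives $\mu(nx)=\mu(x)$ for all $n\ge1$, and applying it to $(2x,-x)$ forces $\mu(-x)=\mu(x)$; hence $\mu(nx)=\mu(x)$ for every nonzero $n\in\Z$. For arbitrary nonzero $x,z$ with $z\notin\{x,-x,-2x\}$, computing $2\mu(2x+z)$ both as $\mu(2x)+\mu(z)$ (from $(2x,z)$) and as $\mu(x)+\mu(x+z)$ (from $(x,x+z)$), and combining with $2\mu(x+z)=\mu(x)+\mu(z)$ (from $(x,z)$), gives $\mu(z)=\mu(x)$; the excluded values of $z$ are covered by the previous sentence. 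Hence $\mu\equiv c$ and $\psi_x=c\cdot\id_V$ for all $x\ne0$. Finally, to pin down $\psi_0$, I would return to the identity with $y=-x$ and $x\ne0$: since all nonzero-degree $\psi$'s now equal $c\cdot\id_V$, it reduces to $\pr vx\rho(w)=-\pr wx\rho(v)$, where $\rho:=\psi_0-c\cdot\id_V$. Putting $w=v$ gives $\pr vx\rho(v)=0$; if $\rho(v)\ne0$ this forces $\pr vA=\{0\}$, and then the same identity, taken with any $x\ne0$ and any $w$ with $\pr wx\ne0$, yields $\rho(v)=0$, a contradiction. Therefore $\rho=0$, i.e. $\psi_0=c\cdot\id_V$, and hence $\vf_0=c\cdot\id_W\in\spn_F\{\id\}$.

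I expect the only real obstacle to be the bookkeeping in the last two steps: solving $2\mu(x+y)=\mu(x)+\mu(y)$ while keeping track of the ``$\ne0$'' side conditions (this is exactly where torsion-freeness of $A$ enters), and the separate analysis of the degree-zero component $\psi_0$, which the scaling argument for the nonzero degrees does not see. Everything before that is a routine specialization of \cref{conditions-on-d}.
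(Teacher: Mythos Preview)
Your proof is correct but follows a different path from the paper's. Both arguments begin identically: setting $y=0$ gives $\psi_x=\mu(x)\,\id_V$ for every $x\ne 0$. From there you substitute back into the full identity (with $x,y,x+y$ all nonzero) to extract the Jensen-type functional equation $2\mu(x+y)=\mu(x)+\mu(y)$, solve it on the torsion-free group $A$ via several auxiliary substitutions, and only afterwards return to the specialization $y=-x$ to pin down $\psi_0$. The paper instead goes \emph{directly} to $y=-x$: taking first $w=v$ and then separately the case $\pr vx=0$, it obtains $d_0(v_0)=k_x v$ for every $v\in V$ and every $x\ne 0$; since the left-hand side is independent of $x$, the constancy of $k_x$ and the equality $\psi_0=k\,\id_V$ drop out simultaneously, with no functional equation to solve. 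The paper's route is shorter and, notably, never invokes $\dim V>1$ --- a hypothesis you use to force $\pr vy\,w-\pr wx\,v\ne 0$, but which is not part of the lemma's statement (though it is the only case needed for \cref{descr-Dl(W)}). Your approach has the merit of isolating the combinatorics cleanly in an equation on $A$, but the careful bookkeeping of the ``$\ne 0$'' side conditions in that equation is exactly the work the paper sidesteps.
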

\begin{proof}
	For $a=0$, equality \cref{half-der-in-terms-of-d_af} takes the form
		\begin{align}\label{one-half-der-in-terms-of-d_0}
			2d_0(\pr vy w_{x+y}-\pr wx v_{x+y})=\pr {d_0(v_x)}y w-\pr wx d_0(v_x)
			+\pr vy d_0(w_y)-\pr {d_0(w_y)}x v.
		\end{align}
	Then $y=0$ in \cref{one-half-der-in-terms-of-d_0} gives
	\begin{align*}
		\pr wx d_0(v_x)=\pr {d_0(w_0)}x v.
	\end{align*}
If $x\ne 0$, then choosing $w\in V$ with $\pr wx\ne 0$, we obtain
\begin{align}\label{d_0(v_b)=k_bv}
	d_0(v_x)=\frac{\pr {d_0(w_0)}x}{\pr wx} v=:k_x v,\text{ if }x\ne 0.
\end{align}
In particular, $d_0(v_x)=d_0(v_{-x})$ for all $x\ne 0$.
On the other hand, taking $y=-x\ne 0$ in \cref{one-half-der-in-terms-of-d_0}, we have
\begin{align}\label{d_0-for-c=-b}
	2d_0(\pr vx w_0+\pr wx v_0)=\pr {d_0(v_x)}x w+\pr wx d_0(v_x)+\pr vx d_0(w_x)+\pr {d_0(w_x)}x v,
\end{align}
which for $w=v$ gives
\begin{align*}
	2\pr vx d_0(v_0)=\pr {d_0(v_x)}x v+\pr vx d_0(v_x).
\end{align*}
Choosing $v\in V$ with $\pr vx\ne 0$ and applying \cref{d_0(v_b)=k_bv}, we conclude that
\begin{align}\label{d_0(v_0)=kv-<v_b>-ne-0}
	d_0(v_0)=k_x v=d_0(v_x),\text{ if }\pr vx\ne 0.
\end{align}
If $x\ne 0$ and $\pr vx=0$, then, thanks to \cref{d_0(v_b)=k_bv}, equality \cref{d_0-for-c=-b} becomes 
\begin{align*}
	2\pr wx d_0(v_0)=\pr wx d_0(v_x)+\pr {d_0(w_x)}x v=\pr wx k_xv+\pr {k_xw}x v=2k_x\pr wx v.
\end{align*}
Choosing an arbitrary $w\in V$ with $\pr wx\ne 0$, we arrive at
\begin{align}\label{d_0(v_0)=kv-<v_b>=0}
	d_0(v_0)=k_x v=d_0(v_x),\text{ if }x\ne 0\text{ and }\pr vx=0.
\end{align}
Combining \cref{d_0(v_0)=kv-<v_b>-ne-0,d_0(v_0)=kv-<v_b>=0,dd_a(b-tn-v)=(a+b)-tn-d_a(b-tn-v)}, we finally prove the desired fact.
\end{proof}

\begin{prop}\label{descr-Dl(W)}
	If $\dim(V)>1$, then $\Dl(W)=\spn_F\{\id\}$.
\end{prop}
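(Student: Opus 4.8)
The plan is to decompose an arbitrary $\frac{1}{2}$-derivation into its graded components and then show that only the degree-$0$ component can survive, and that component must be a scalar multiple of the identity. More precisely, given $\vf\in\Dl(W)$, write $\vf=\sum_{a\in A}\vf_a$ as in the discussion preceding the statement, where each $\vf_a$ shifts the $A$-grading by $a$ and satisfies \cref{dd_a(b-tn-v)=(a+b)-tn-d_a(b-tn-v)}. By the remark already made in the text, $\vf\in\Dl(W)$ if and only if $\vf_a\in\Dl(W)$ for every $a\in A$, so it suffices to treat each homogeneous component separately.

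First I would dispose of the components of nonzero degree: by \cref{d_a=zero}, for every $a\ne 0$ any $\vf_a\in\Dl(W)$ satisfying \cref{dd_a(b-tn-v)=(a+b)-tn-d_a(b-tn-v)} must vanish, so $\vf_a=0$ for all $a\ne 0$. (This is exactly the place where the hypothesis $\dim(V)>1$ is used, via \cref{<v'_a>_<v''_a>-ne-0}.) Hence $\vf=\vf_0$. Then I would invoke \cref{d_gm(af)=d_gm(0)}: the degree-$0$ component $\vf_0\in\Dl(W)$ lies in $\spn_F\{\id\}$. Combining the two, $\vf\in\spn_F\{\id\}$, which gives the inclusion $\Dl(W)\sst\spn_F\{\id\}$. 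The reverse inclusion is immediate, since the identity map is always a $\frac{1}{2}$-derivation (it is a trivial one, being multiplication by the scalar $1$), and $\Dl(W)$ is a linear space. This yields the equality $\Dl(W)=\spn_F\{\id\}$.

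There is essentially no obstacle left at this stage: the entire analytic content has been carried out in the preceding lemmas, and the proposition is just their assembly. If anything, the only point deserving a word of care is the legitimacy of the (possibly infinite) decomposition $\vf=\sum_{a\in A}\vf_a$ and the claim that membership in $\Dl(W)$ is detected componentwise; but this was already established in the paragraph introducing \cref{dd_a(b-tn-v)=(a+b)-tn-d_a(b-tn-v)}, so I would simply cite it. The proof is therefore a two-line argument: $\vf=\vf_0$ by \cref{d_a=zero}, and $\vf_0\in\spn_F\{\id\}$ by \cref{d_gm(af)=d_gm(0)}.

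As a consequence worth recording immediately afterwards (and presumably used for Theorem~A(i)), $W$ has no non-trivial $\frac{1}{2}$-derivations when $\dim(V)>1$, so by \cref{princth} all transposed Poisson algebra structures on $W(A,V,\pr\cdot\cdot)$ are trivial; the case $\dim(V)=1$ is handled separately via \cref{W-cong-V(f)} and the known description of transposed Poisson structures on Witt type Lie algebras from~\cite{kk23}.
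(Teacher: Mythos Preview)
Your proposal is correct and follows exactly the same approach as the paper: invoke \cref{d_a=zero} to kill the graded components $\vf_a$ for $a\ne 0$, then \cref{d_gm(af)=d_gm(0)} to force $\vf_0\in\spn_F\{\id\}$, with the reverse inclusion being trivial. The paper's proof is a two-line version of what you wrote.
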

\begin{proof}
	The inclusion $\Dl(W)\sst\spn_F\{\id\}$ is \cref{d_a=zero,d_gm(af)=d_gm(0)}. The converse inclusion is trivial.
\end{proof}

	\begin{thrm}\label{desc-TP-W}
		Let $\mathrm{char}(F)=0$ and $\pr\cdot\cdot$ be non-degenerate. 
		\begin{enumerate}
			\item\label{TP-W-dim(V)>1} If $\dim(V)>1$, then all the transposed Poisson algebra structures on $W(A,V,\pr\cdot\cdot)$ are trivial. 
			\item\label{TP-W-dim(V)=1} If $\dim(V)=1$, say, $V=\spn_F\{v\}$, then the transposed Poisson algebra structures on $W(A,V,\pr\cdot\cdot)$ are exactly mutations of the product $(a\ot v)\cdot(b\ot v)=(a+b)\ot v$.
		\end{enumerate}
	\end{thrm}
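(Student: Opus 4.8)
The first part needs no further argument: \cref{descr-Dl(W)} tells us that $W=W(A,V,\pr\cdot\cdot)$ has only trivial $\frac{1}{2}$-derivations when $\dim(V)>1$, so \cref{princth} (or, equivalently, \cref{glavlem}) immediately forces every transposed Poisson algebra structure on it to be trivial.

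For the second part the plan is to move everything to the Witt type Lie algebra and use the classification already available there. Fix $v\in V\setminus\{0\}$ and let $\vf\colon W\to V(f)$, $\vf(a\ot v)=e_a$, be the isomorphism of \cref{W-cong-V(f)}, so that $f(a)=\pr va$ is additive, injective and has infinite image. Any Lie algebra isomorphism transports transposed Poisson algebra structures bijectively, and under $\vf$ the product $(a\ot v)\cdot(b\ot v)=(a+b)\ot v$ becomes the group algebra product $e_a\circ e_b=e_{a+b}$ on $V(f)$, while mutations of the former correspond to mutations $x*_cy=(x\circ c)\circ y$ of the latter. Hence it suffices to prove that the transposed Poisson algebra structures on $V(f)$ are exactly the mutations of $\circ$.

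One inclusion I would check directly. First, $\circ$ is itself a transposed Poisson algebra structure on $V(f)$: writing \cref{Trans-Leibniz} on basis elements, both sides collapse to $2\big(f(y)-f(x)\big)e_{x+y+z}$ as soon as one uses the additivity of $f$ --- which is precisely what \cref{W-cong-V(f)} guarantees. Consequently, for fixed $c$, multiplication by $z$ in $(V(f),*_c)$ is multiplication by the fixed element $z\circ c$ in $(V(f),\circ)$, hence a $\frac{1}{2}$-derivation of $V(f)$ by \cref{glavlem}; since $*_c$ is visibly commutative and associative, \cref{glavlem} then makes each $*_c$ a transposed Poisson algebra structure.

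The reverse inclusion is the substantive point, and it is where the main obstacle sits: I would invoke the classification of transposed Poisson structures on Witt type Lie algebras $V(f)$ obtained in \cite{kk23}, applied to the additive injective $f$ with infinite image produced by \cref{W-cong-V(f)}, to conclude that no transposed Poisson structure other than the $*_c$ occurs. What then remains on our side is only the routine bookkeeping: checking that $\vf$ matches the description from \cite{kk23} with the family of mutations and, in particular, sends mutations of the group algebra $FA$ precisely to mutations of $(a\ot v)\cdot(b\ot v)=(a+b)\ot v$.
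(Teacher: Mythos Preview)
Your proposal is correct and follows the same route as the paper: part~\ref{TP-W-dim(V)>1} is deduced from \cref{descr-Dl(W)} together with \cref{princth} (the paper cites the equivalent \cite[Theorem~8]{FKL}), and part~\ref{TP-W-dim(V)=1} is obtained by transporting via \cref{W-cong-V(f)} to $V(f)$ and invoking the classification from~\cite{kk23} (the paper cites \cite[Proposition~26]{kk23} directly). The paper's proof is more terse---it simply names the two references---whereas you additionally verify by hand that mutations of $\circ$ satisfy \cref{Trans-Leibniz}; this extra check is harmless but redundant, since it is already contained in the cited result.
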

	\begin{proof}
		\cref{TP-W-dim(V)>1} is an immediate consequence of \cref{descr-Dl(W)} and \cite[Theorem 8]{FKL}, while \cref{TP-W-dim(V)=1} follows from \cref{W-cong-V(f)} and \cite[Proposition 26]{kk23}.
	\end{proof}

	\section{Transposed Poisson structures on Block  Lie algebras}\label{sec-TP-Block}	
	\subsection{Block  Lie algebras}
	
	Another generalization of the Witt algebra is the class of Lie algebras studied by Block in~\cite{block58}.
	\begin{defn}
		Let $F$ be a field, $(A,+)$ a non-trivial abelian group, $g:A\to F$ an additive map and $f:A\times A\to F$ an anti-symmetric biadditive map. The \textit{Block algebra} $L(A,g,f)$ is the $F$-vector space with basis $\{u_a\}_{a\in A}$ and product
		\begin{align}\label{[u_a_u_b]=(f(a_b)+g(a-b))u_(a+b)}
			[u_a,u_b]=(f(a,b)+g(a-b))u_{a+b}.
		\end{align}
	\end{defn}
	It is known~\cite{block58} (and, in fact, easy to see) that $L(A,g,f)$ is a Lie algebra if and only if either $g=0$ or there exists an additive map $h:A\to F$ such that for all $a,b\in A$:
	\begin{align}\label{f(a_b)=g(a)h(b)-g(b)h(a)}
		f(a,b)=g(a)h(b)-g(b)h(a).
	\end{align}
	We will write $L=L(A,g,f)$ for the simplicity of notation. We will also assume that $\ch(F)=0$ and $f$ is \textit{non-degenerate} in the sense that
	\begin{align}\label{f(a_A)=0-iff-a=0}
		f(a,A)=\{0\}\iff a=0.
	\end{align}
	Then, as in \cref{sec-gen-witt}, this implies that $A$ is torsion-free. 
	
	Observe that $L(A,g,f)$ is a generalization of the \textit{Block Lie algebra} $\B(q)$ studied in \cite{kk22} (it had been introduced in \cite{xyz12} under slightly different assumptions on $q$ and on the basis). Recall that $\B(q)$, where $q\in\C$, is the complex Lie algebra with a basis $\{L_{m,i} \mid m, i \in \Z\}$, where
	\begin{align*}
		[L_{m,i}, L_{n,j}] = (n(i + q) - m(j + q))L_{m+n,i+j}
	\end{align*}
	for all $i, j, m, n \in \Z$. It is immediately seen that $\B(q)=L(A,g,f)$, where $F=\C$, $A=\Z\times\Z$, 
	\begin{align}\label{g-and-f-for-B(q)}
		g(m,i)=-qm\text{ and }f((m,i),(n,j))=ni-mj
	\end{align}
for $(m,i),(n,j)\in\Z\times\Z$. If $q\ne 0$, then the corresponding map $h$ from \cref{f(a_b)=g(a)h(b)-g(b)h(a)} can be chosen to be
\begin{align}\label{h-for-B(q)}
	h(m,i)=i/q.
\end{align}
Observe that $f$ is non-degenerate, because $f((m,i),(0,-1))=m$ and $f((m,i),(1,0))=i$.
	
	We will need descriptions of $Z(L)$ and $[L,L]$ in the general case.
	\begin{lem}\label{Z(L)-and-[L_L]}
		Let $f$ be non-degenerate.
		\begin{enumerate}
			\item\label{item-Z(L)} If $g=0$, then $Z(L)=\spn_F\{u_0\}$. Otherwise, $Z(L)=\spn_F\{u_a\mid g(a)=h(a)+1=0\}$.
			\item\label{item-[L_L]} If $g=0$, then $[L,L]=\spn_F\{u_a\mid a\ne 0\}$. Otherwise, $[L,L]=\spn_F\{u_a\mid g(a)\ne 0\text{ or }h(a)+2\ne 0\}$.
		\end{enumerate}
	\end{lem}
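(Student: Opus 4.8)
The plan is to compute directly from the defining relation \cref{[u_a_u_b]=(f(a_b)+g(a-b))u_(a+b)}. For an element $x=\sum_a\lb_a u_a$ (finite support) one has $[x,u_b]=\sum_a\lb_a(f(a,b)+g(a-b))u_{a+b}$, so $x\in Z(L)$ iff for every $b\in A$ and every $a$ in the support of $x$ the scalar $f(a,b)+g(a-b)$ vanishes. Since the $u_{a+b}$ with $a$ ranging over a fixed support are linearly independent for fixed $b$, this forces $f(a,b)+g(a-b)=0$ for all $b\in A$ and all $a\in\mathrm{supp}(x)$; hence $Z(L)=\spn_F\{u_a\mid f(a,b)+g(a-b)=0\ \forall b\in A\}$. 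It therefore suffices to identify, for a single basis vector $u_a$, when $u_a\in Z(L)$, and likewise to compute $[L,L]=\spn_F\{(f(a,b)+g(a-b))u_{a+b}\mid a,b\in A\}$, i.e. to determine which $u_c$ lie in the span of the products.

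For the center: if $g=0$ then $f(a,b)+g(a-b)=f(a,b)$, and $u_a\in Z(L)$ iff $f(a,A)=\{0\}$, which by non-degeneracy \cref{f(a_A)=0-iff-a=0} holds iff $a=0$; this gives \cref{item-Z(L)} in the case $g=0$. If $g\ne0$, write $f(a,b)=g(a)h(b)-g(b)h(a)$ by \cref{f(a_b)=g(a)h(b)-g(b)h(a)} and $g(a-b)=g(a)-g(b)$ by additivity, so
\begin{align*}
f(a,b)+g(a-b)=g(a)h(b)-g(b)h(a)+g(a)-g(b)=g(a)(h(b)+1)-g(b)(h(a)+1).
\end{align*}
If $g(a)=h(a)+1=0$ this is identically $0$ in $b$, so $u_a\in Z(L)$. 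Conversely, if $u_a\in Z(L)$ then $g(a)(h(b)+1)=g(b)(h(a)+1)$ for all $b$; I would argue that $g(a)=0$ (else, since $g\ne0$, pick $b_0$ with $g(b_0)\ne0$ and $h(b_0)+1\ne0$—the latter is arrangeable because $h$ is additive and not constantly $-1$ on the support where $g\ne0$, or else use that $g(a)(h(b)+1)$ must vanish whenever $g(b)=0$, forcing $h\equiv-1$ on $\ker g$, which contradicts $h(0)=0$ unless $\ker g=0$, a case handled separately), hence then $g(b)(h(a)+1)=0$ for all $b$, and choosing $b$ with $g(b)\ne0$ gives $h(a)+1=0$. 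This yields $Z(L)=\spn_F\{u_a\mid g(a)=h(a)+1=0\}$.

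For the square: the products are scalar multiples of $u_{a+b}$ with coefficient $f(a,b)+g(a-b)$, so $u_c\in[L,L]$ iff there exist $a,b$ with $a+b=c$ and $f(a,b)+g(a-b)\ne0$. When $g=0$ this coefficient is $f(a,c-a)=f(a,c)$, which for some $a$ is nonzero iff $f(A,c)\ne\{0\}$, i.e. iff $c\ne0$ by non-degeneracy; so $[L,L]=\spn_F\{u_a\mid a\ne0\}$. When $g\ne0$, with $b=c-a$ the coefficient equals $g(a)(h(c-a)+1)-g(c-a)(h(a)+1)$, and after expanding via additivity of $g$ and $h$ this simplifies (a short computation) to $g(c)h(a)-g(a)h(c)-g(c)-2g(a)+g(a)h(c)\cdots$—more cleanly, using the center computation, $u_c\notin[L,L]$ iff $f(a,c-a)+g(2a-c)=0$ for all $a$, which I expect to reduce exactly to the two conditions $g(c)=0$ and $h(c)+2=0$; conversely if $g(c)\ne0$ or $h(c)+2\ne0$ one exhibits an explicit $a$ making the coefficient nonzero. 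Thus $[L,L]=\spn_F\{u_a\mid g(a)\ne0\text{ or }h(a)+2\ne0\}$, completing \cref{item-[L_L]}.

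The main obstacle is the bookkeeping in the $g\ne0$ case: one must handle the degenerate possibility that $g$ and $h+1$ (or $h+2$) are proportional as functions on $A$, and justify the existence of the auxiliary elements $b_0$ (resp. $a$) realizing the nonvanishing, using only that $g\ne0$, that $h$ is additive with $h(0)=0$, that $f=g\wedge h$ is non-degenerate, and that $A$ is torsion-free of characteristic $0$. I expect non-degeneracy of $f$ to be exactly what rules out the pathological proportionality, since $f(a,b)=g(a)h(b)-g(b)h(a)$ vanishes identically precisely when $g$ and $h$ are linearly dependent. Everything else is linear-algebra over $F$ together with the observation that distinct $u_{a+b}$ are linearly independent, which is routine.
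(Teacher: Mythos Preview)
Your overall strategy---reduce to the condition $f(a,b)+g(a-b)=0$ for all $b$ (center) or for all decompositions $a+b=c$ (square), then analyze this identity---is exactly the paper's approach, and your factorization
\[
f(a,b)+g(a-b)=g(a)(h(b)+1)-g(b)(h(a)+1)
\]
is correct and useful. The $g=0$ cases are fine.

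However, in the $g\ne 0$ cases you leave genuine gaps and then misdiagnose the obstacle. For the center, you try to manufacture $b_0$ with $g(b_0)\ne 0$ and $h(b_0)+1\ne 0$ simultaneously, and worry about ``pathological proportionality'' of $g$ and $h$. None of this is needed: simply take $b=0$. Since $h(0)=0$ and $g(0)=0$, your identity at $b=0$ reads $g(a)\cdot 1=0$, so $g(a)=0$ immediately; then any $b$ with $g(b)\ne 0$ forces $h(a)+1=0$. This is precisely what the paper does (it computes $[x,u_0]=\sum x_a g(a)u_a$). For the square, you stop at ``I expect to reduce exactly to\ldots'' without carrying out the computation. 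If you write $b=c-a$, your own factorization gives
\[
f(a,c-a)+g(2a-c)=g(a)(h(c)+2)-g(c)(h(a)+1),
\]
and again setting $a=0$ yields $g(c)=0$, after which any $a$ with $g(a)\ne 0$ forces $h(c)+2=0$. Conversely, the paper exhibits the explicit witnesses: $[u_c,u_0]=g(c)u_c$ when $g(c)\ne 0$, and $[u_{c-b},u_b]=-g(b)(h(c)+2)u_c$ when $g(c)=0$ but $h(c)+2\ne 0$.

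So the missing idea is not a non-degeneracy or proportionality argument at all; it is just the specialization to $0$, which your own formula makes transparent. Non-degeneracy of $f$ is only used in the $g=0$ cases, exactly as you already do.
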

	\begin{proof}
		\textit{\cref{item-Z(L)}.} Let $g=0$. The inclusion $\spn_F\{u_0\}\sst Z(L)$ is trivial. Conversely, if $x=\sum x_au_a\in Z(L)$ and $x_a\ne 0$ for some $a\ne 0$, then choose $b\in A$ such that $f(a,b)\ne 0$ (it exists due to non-degeneracy of $f$) and calculate 
		\begin{align*}
			[x,u_b]=\sum_{c\ne a} x_c f(c,b)u_{c+b}+x_af(a,b)u_{a+b}\ne 0.
		\end{align*}
		
		Let $g\ne 0$. If $g(a)=h(a)+1=0$, then for all $b\in A$
		\begin{align*}
			f(a,b)+g(a-b)=g(a)h(b)-g(b)h(a)+g(a)-g(b)=g(b)-g(b)=0,
		\end{align*}
		so $[u_a,u_b]=0$. This proves the inclusion $\spn_F\{u_a\mid g(a)=h(a)+1=0\}\sst Z(L)$. Conversely, assume that $x=\sum x_au_a\in Z(L)$. Then $[x,u_0]=0$ implies $g(a)=0$ for all $a$ with $x_a\ne 0$. Consequently, $[x,u_b]=-g(b)\sum x_a(h(a)+1)u_{a+b}$. Choosing $b\in A$ with $g(b)\ne 0$, we conclude that $h(a)+1=0$ whenever $x_a\ne 0$. Thus, $Z(L)\sst\spn_F\{u_a\mid g(a)=h(a)+1=0\}$.
		
		\textit{\cref{item-[L_L]}.} Let $g=0$. If $[u_a,u_b]\ne 0$, then $b\ne -a$, since otherwise $f(a,b)=f(a,-a)=0$. Hence, $[u_a,u_b]=f(a,b)u_{a+b}\in \spn_F\{u_a\mid a\ne 0\}$. This proves $[L,L]\sst\spn_F\{u_a\mid a\ne 0\}$. Conversely, for any $a\ne 0$ there exists $b\in A$ such that $f(a,b)\ne 0$. Then $[u_{a-b},u_b]=f(a-b,b)u_a=f(a,b)u_a\ne 0$, whence $u_a\in[L,L]$.
		
		Let $g\ne 0$. If $[u_a,u_b]\ne 0$, then either $g(a+b)\ne 0$ or $h(a+b)+2\ne 0$, since otherwise
		\begin{align*}
			f(a,b)+g(a-b)&=g(a)h(b)-g(b)h(a)+g(a)-g(b)\\
			&=g(a)(-h(a)-2)+g(a)h(a)+g(a)+g(a)=0.
		\end{align*}
	Hence, $[L,L]\sst\spn_F\{u_a\mid g(a)\ne 0\text{ or }h(a)+2\ne 0\}$. Conversely, take $a\in A$ with $g(a)\ne 0$ or $h(a)+2\ne 0$. If $g(a)\ne 0$, then $[u_a,u_0]=g(a)u_a\ne 0$, so $u_a\in [L,L]$. Otherwise, $h(a)+2\ne 0$ and
	\begin{align*}
		f(a-b,b)+g(a-b-b)&=f(a,b)+g(a-2b)=g(a)h(b)-g(b)h(a)+g(a)-2g(b)\\
		&=-g(b)h(a)-2g(b)=-g(b)(h(a)+2),
	\end{align*}
so choosing $b\in A$ with $g(b)\ne 0$ we have $[u_{a-b},u_b]=-g(b)(h(a)+2)u_a\ne 0$, whence $u_a\in[L,L]$.
	\end{proof}

	We will also need the following technical lemma.
	
	\begin{lem}\label{f(a_c)-ne-0-ne-f(b_c)}
		Let $\af,\bt:A\to F$ two non-zero additive functions. Then there exists $a\in A$ such that $\af(a)\ne 0\ne \bt(a)$.
	\end{lem}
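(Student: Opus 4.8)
The plan is to reduce this to the elementary group-theoretic fact that a group is never the union of two proper subgroups. Since $\af$ and $\bt$ are additive, their kernels $\ker\af=\{a\in A\mid\af(a)=0\}$ and $\ker\bt=\{a\in A\mid\bt(a)=0\}$ are subgroups of $(A,+)$, and they are \emph{proper} subgroups precisely because $\af\ne 0$ and $\bt\ne 0$. Hence it suffices to exhibit an element $a\in A$ lying outside $\ker\af\cup\ker\bt$, since any such $a$ satisfies $\af(a)\ne 0\ne\bt(a)$.

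To see that $A\ne\ker\af\cup\ker\bt$, I would argue by contradiction. Assume $A=\ker\af\cup\ker\bt$. Pick $a_1\in A$ with $\af(a_1)\ne 0$; then $a_1\notin\ker\af$, so $a_1\in\ker\bt$, that is, $\bt(a_1)=0$. Similarly pick $a_2\in A$ with $\bt(a_2)\ne 0$; then $a_2\notin\ker\bt$, so $a_2\in\ker\af$, that is, $\af(a_2)=0$. Now consider $a_1+a_2$. By additivity $\af(a_1+a_2)=\af(a_1)+\af(a_2)=\af(a_1)\ne 0$, so $a_1+a_2\notin\ker\af$ and therefore $a_1+a_2\in\ker\bt$; but $\bt(a_1+a_2)=\bt(a_1)+\bt(a_2)=\bt(a_2)\ne 0$, a contradiction.

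Alternatively, one can give a direct construction avoiding contradiction: choose $a_1,a_2\in A$ with $\af(a_1)\ne 0$ and $\bt(a_2)\ne 0$, and observe that among the three elements $a_2$, $a_1+a_2$, $2a_1+a_2$ at least one is as required, since the map $n\mapsto\af(na_1+a_2)=n\af(a_1)+\af(a_2)$ on $\Z$ vanishes for at most one value of $n$ (here one uses $\ch(F)=0$ together with $\af(a_1)\ne 0$), and likewise $n\mapsto\bt(na_1+a_2)$ vanishes for at most one $n$, so at most two of the three candidate values of $n$ are excluded.

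I do not expect any genuine obstacle here; the only point requiring a little care is that $A$ is merely a (non-trivial) abelian group rather than an $F$-vector space, but additive maps are automatically $\Z$-linear and their kernels are subgroups, so the argument goes through verbatim. In particular, the first proof needs neither $\ch(F)=0$ nor the torsion-freeness of $A$.
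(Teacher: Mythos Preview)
Your proof is correct and follows essentially the same approach as the paper: both argue by contradiction that $A$ cannot be the union of the two proper subgroups $\ker\af$ and $\ker\bt$. The paper phrases the key step abstractly (if $A=\ker\af\cup\ker\bt$ with both subgroups, then one kernel contains the other), whereas you unfold the standard proof of that fact via the element $a_1+a_2$; these are the same argument.
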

	\begin{proof}
		Assume that for any $a\in A$ either $\af(a)=0$ or $\bt(a)=0$. Then $A=\ker\af\cup\ker\bt$. Since $\ker\af$ and $\ker\bt$ are subgroups of $A$, then either $\ker\af\sst\ker\bt$, in which case $A=\ker\bt$, or $\ker\bt\sst \ker\af$, in which case $A=\ker\af$. Hence, either $\af=0$ or $\bt=0$, a contradiction.
	\end{proof}
 
\subsection{$\frac{1}{2}$-derivations of Block  Lie algebras}
It follows from \cref{[u_a_u_b]=(f(a_b)+g(a-b))u_(a+b)} that $L=\bigoplus_{a\in A} Fu_a$ is an $A$-grading, so any linear map $\vf:L\to L$ decomposes into the direct sum of linear maps
\begin{align*}
	\vf=\sum_{a\in A}\vf_a,
\end{align*}
where $\vf_a(u_b)\in Fu_{a+b}$ for all $b\in A$. Moreover, $\vf\in\Dl(L)$ if and only if $\vf_a\in\Dl(L)$ for all $a\in A$. As usual, we write 
\begin{align}\label{vf_a(u_b)=d_a(u_b)u_(a+b)}
	\vf_a(u_b)=d_a(u_b)u_{a+b},
\end{align}
where $d_a:L\to F$.

\begin{lem}\label{conditions-on-d-case-g=0}
	Let $\vf_a:L\to L$ be a linear map satisfying \cref{vf_a(u_b)=d_a(u_b)u_(a+b)}. Then $\vf_a\in \Dl (L)$ if and only if for all $x,y\in A$
	\begin{align}
		&2(f(x,y)+g(x-y))d_a(x+y)=\notag\\
		&\quad(f(a+x,y)+g(a+x-y))d_a(x)+(f(x,a+y)+g(x-a-y))d_a(y).\label{half-der-in-terms-of-d_a}
	\end{align}
\end{lem}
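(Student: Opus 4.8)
The plan is to verify the defining identity \cref{vf(xy)=half(vf(x)y+xvf(y))} of a $\frac12$-derivation directly on pairs of homogeneous basis elements of $L$, in complete analogy with the proof of \cref{conditions-on-d}. Since $L=\bigoplus_{a\in A}Fu_a$ and both sides of \cref{vf(xy)=half(vf(x)y+xvf(y))} are bilinear in the arguments, a linear map $\vf_a$ of the form \cref{vf_a(u_b)=d_a(u_b)u_(a+b)} lies in $\Dl(L)$ if and only if
\begin{align*}
	2\vf_a([u_x,u_y])=[\vf_a(u_x),u_y]+[u_x,\vf_a(u_y)]
\end{align*}
holds for all $x,y\in A$; here and below we abbreviate $d_a(u_z)$ by $d_a(z)$ for $z\in A$, as in the statement.

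First I would expand the left-hand side using the bracket \cref{[u_a_u_b]=(f(a_b)+g(a-b))u_(a+b)} and \cref{vf_a(u_b)=d_a(u_b)u_(a+b)}:
\begin{align*}
	2\vf_a([u_x,u_y])&=2(f(x,y)+g(x-y))\vf_a(u_{x+y})\\
	&=2(f(x,y)+g(x-y))d_a(x+y)\,u_{a+x+y},
\end{align*}
and then expand the right-hand side the same way:
\begin{align*}
	[\vf_a(u_x),u_y]+[u_x,\vf_a(u_y)]&=d_a(x)[u_{a+x},u_y]+d_a(y)[u_x,u_{a+y}]\\
	&=\bigl(d_a(x)(f(a+x,y)+g(a+x-y))+d_a(y)(f(x,a+y)+g(x-a-y))\bigr)u_{a+x+y}.
\end{align*}
All terms lie in the one-dimensional space $Fu_{a+x+y}$, so the displayed identity is equivalent to the equality of the scalar coefficients of $u_{a+x+y}$, which is exactly \cref{half-der-in-terms-of-d_a}. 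This proves both implications simultaneously.

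There is no genuine obstacle here; the argument is a routine graded computation. The only point worth noting is the degenerate case $f(x,y)+g(x-y)=0$, in which $[u_x,u_y]=0$ and hence the left-hand side above is $0$; but then the coefficient $f(x,y)+g(x-y)$ that multiplies $d_a(x+y)$ in \cref{half-der-in-terms-of-d_a} also vanishes, so the asserted equivalence still holds verbatim.
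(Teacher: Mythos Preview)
Your proposal is correct and is essentially identical to the paper's own proof: both expand $2\vf_a([u_x,u_y])$ and $[\vf_a(u_x),u_y]+[u_x,\vf_a(u_y)]$ using \cref{[u_a_u_b]=(f(a_b)+g(a-b))u_(a+b),vf_a(u_b)=d_a(u_b)u_(a+b)}, observe that both sides lie in $Fu_{a+x+y}$, and conclude by comparing scalar coefficients. Your extra remark on the degenerate case $f(x,y)+g(x-y)=0$ is harmless but unnecessary, since the resulting equality \cref{half-der-in-terms-of-d_a} already absorbs it.
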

\begin{proof}
	By \cref{[u_a_u_b]=(f(a_b)+g(a-b))u_(a+b),vf_a(u_b)=d_a(u_b)u_(a+b)} we have
	\begin{align*}
		2\vf_a([u_x,u_y])&=2\vf_a((f(x,y)+g(x-y))u_{x+y})=2(f(x,y)+g(x-y))d_a(x+y) u_{a+x+y}
	\end{align*}
	and
	\begin{align*}
		[\vf_a(u_x),u_y]+[u_x,\vf_a(u_y)]&=[d_a(x)u_{a+x},u_y]+[u_x,d_a(y)u_{a+y}]\\
		&=(f(a+x,y)+g(a+x-y))d_a(x)u_{a+x+y}\\
		&\quad+(f(x,a+y)+g(x-a-y))d_a(y)u_{a+x+y}.
	\end{align*}
\end{proof}
	
	\subsubsection{The case $g=0$}
	
	Assume first that $g=0$.


\begin{lem}\label{d_a=zero-for-a-ne-0}
	Let $a\ne 0$ and $\vf_a\in\Dl(L)$ satisfying \cref{vf_a(u_b)=d_a(u_b)u_(a+b)}. Then $\vf_a=0$.
\end{lem}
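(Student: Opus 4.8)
The plan is to translate the condition $\vf_a\in\Dl(L)$, via \cref{conditions-on-d-case-g=0} with $g=0$, into the identity
\[
2f(x,y)\,d_a(x+y)=f(a+x,y)\,d_a(x)+f(x,a+y)\,d_a(y)\qquad(x,y\in A),
\]
and to deduce from it, by a chain of substitutions, that the scalar function $d_a:L\to F$ vanishes identically; then $\vf_a=0$ by \cref{vf_a(u_b)=d_a(u_b)u_(a+b)}. Throughout I will freely use that $f$ is biadditive and anti-symmetric, that it is non-degenerate in the sense of \cref{f(a_A)=0-iff-a=0}, and that $\ch(F)=0$.

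First I would set $y=0$: since $f(x,0)=f(a+x,0)=0$, the identity collapses to $f(x,a)\,d_a(0)=0$ for every $x\in A$, and non-degeneracy (together with $a\ne 0$) forces $d_a(0)=0$. Next I substitute $x=a$, using $f(2a,y)=2f(a,y)$ and $f(a,a+y)=f(a,y)$, to get $2d_a(a+y)=2d_a(a)+d_a(y)$ whenever $f(a,y)\ne 0$; and I substitute $x=-a$, using $f(0,y)=0$ and $f(-a,a+y)=-f(a,y)$, to get $2d_a(y-a)=d_a(y)$ whenever $f(a,y)\ne 0$. Since the set $\{y\in A\mid f(a,y)\ne 0\}$ is stable under $y\mapsto y\pm a$, the second relation may be rewritten as $d_a(a+y)=2d_a(y)$ on that set. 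Plugging this into the first relation yields $3d_a(y)=2d_a(a)$ for all $y$ with $f(a,y)\ne 0$; applying this once more with $y$ replaced by $a+y$ and using $d_a(a+y)=2d_a(y)$ gives $6d_a(y)=2d_a(a)=3d_a(y)$, hence $d_a(y)=0$ for all such $y$, and consequently $d_a(a)=0$ as well.

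It remains to show $d_a(z)=0$ for $z\ne 0$ with $f(a,z)=0$. For such a $z$, both $f(a,\cdot)$ and $f(z,\cdot)$ are non-zero additive maps, so by \cref{f(a_c)-ne-0-ne-f(b_c)} there is an $x\in A$ with $f(a,x)\ne 0\ne f(z,x)$. Put $y=z-x$ in the identity. Then $f(a,x)\ne 0$ and $f(a,z-x)=-f(a,x)\ne 0$, so the previous step gives $d_a(x)=d_a(z-x)=0$ and the right-hand side vanishes; the left-hand side equals $2f(x,z-x)\,d_a(z)=2f(x,z)\,d_a(z)$, and since $f(x,z)=-f(z,x)\ne 0$ we conclude $d_a(z)=0$. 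As this exhausts all $z\in A$, we get $d_a\equiv 0$ and hence $\vf_a=0$.

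I expect the main obstacle to be the middle paragraph: neither the substitution $x=a$ nor $x=-a$ by itself determines $d_a$ on $\{y\mid f(a,y)\ne 0\}$ --- each merely gives a recursion along a coset of $\Z a$ --- so one has to superpose the two recursions and use $\ch(F)=0$ (division by $2$ and $3$) to make them collapse, all while keeping careful track of which arguments are admissible. Once $d_a$ is annihilated on $\{y\mid f(a,y)\ne 0\}$, extending this to its complement is routine thanks to \cref{f(a_c)-ne-0-ne-f(b_c)}.
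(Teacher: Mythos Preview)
Your proof is correct and follows essentially the same route as the paper: both derive the recursions $d_a(a+y)=2d_a(y)$ and $3d_a(y)=2d_a(a)$ on $\{y\mid f(a,y)\ne 0\}$ from the same substitutions (up to swapping the roles of $x$ and $y$), and then use \cref{f(a_c)-ne-0-ne-f(b_c)} for the complement. The only cosmetic differences are that the paper kills $d_a$ on the good set via the parity $d_a(-x)=-d_a(x)$ (from $y=-x$) rather than by iterating $y\mapsto a+y$, and it handles the points with $f(a,x)=0$ by choosing a good $y$ with $f(a+x,y)\ne 0$ (treating $x=-a$ separately) instead of your slightly cleaner splitting $z=x+(z-x)$.
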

\begin{proof}
Taking $y=-x$ in \cref{half-der-in-terms-of-d_a} and using anti-symmetry of $f$, we have
\begin{align*}
	0=f(a+x,-x)d_a(x)+f(x,a-x)d_a(-x)=-f(a,x)(d_a(x)+d_a(-x)).
\end{align*}
Hence,
\begin{align}\label{d_a(-b)=-d_a(b)-for-f(a_b)-ne-0}
	d_a(-x)=-d_a(x),\text{ if }f(a,x)\ne 0.
\end{align}
Now, substitute $y=-a$ into \cref{half-der-in-terms-of-d_a}:
\begin{align*}
	2f(x,-a)d_a(x-a)=f(a+x,-a)d_a(x)=f(x,-a)d_a(x),
\end{align*}
whence 
\begin{align*}
	d_a(x)=2d_a(x-a),\text{ if }f(a,x)\ne 0.
\end{align*}
Since $f(a,a+x)=f(a,x)$, the latter is equivalent to
\begin{align}\label{d_a(a+b)=2d_a(b)-for-f(a_b)-ne-0}
	d_a(a+x)=2d_a(x),\text{ if }f(a,x)\ne 0.
\end{align}
On the other hand, $y=a$ in \cref{half-der-in-terms-of-d_a} gives
\begin{align*}
	2f(x,a)d_a(x+a)=f(a+x,a)d_a(x)+f(x,2a)d_a(a)=f(x,a)d_a(x)+2f(x,a)d_a(a).
\end{align*}
If $f(a,x)\ne 0$, then using \cref{d_a(a+b)=2d_a(b)-for-f(a_b)-ne-0}, we come to $4d_a(x)=d_a(x)+2d_a(a)$. Consequently,
\begin{align}\label{3d_a(b)=2d_a(a)-for-f(a_b)-ne-0}
	3d_a(x)=2d_a(a),\text{ if }f(a,x)\ne 0.
\end{align}
However, $f(a,x)\ne 0\iff f(a,-x)\ne 0$, so replacing $x$ by $-x$ in \cref{3d_a(b)=2d_a(a)-for-f(a_b)-ne-0} and taking into account $\mathrm{char}(F)=0$, we have
\begin{align}\label{d_a(-b)=d_a(b)-for-f(a_b)-ne-0}
	d_a(-x)=d_a(x),\text{ if }f(a,x)\ne 0.
\end{align}
Combining \cref{d_a(-b)=-d_a(b)-for-f(a_b)-ne-0,d_a(-b)=d_a(b)-for-f(a_b)-ne-0}, we conclude that
\begin{align}\label{d_a(b)=0-for-f(a_b)-ne-0}
	d_a(x)=0,\text{ if }f(a,x)\ne 0.
\end{align}

Now assume that $f(a,x)=0$. Since $a\ne 0$, by \cref{f(a_A)=0-iff-a=0} there exists $y\in A$ such that $f(a,y)\ne 0$. Observe that $f(a,x+y)=f(a,y)\ne 0$. Then $d_a(y)=d_a(x+y)=0$ thanks to \cref{d_a(b)=0-for-f(a_b)-ne-0}, so \cref{half-der-in-terms-of-d_a} takes the form
\begin{align*}
	0=f(a+x,y)d_a(x).
\end{align*}
By \cref{f(a_c)-ne-0-ne-f(b_c)} applied to $f(a+x,-)$ and $f(a,-)$, whenever $x\ne -a$, the element $y$ can be chosen in a way that $f(a+x,y)\ne 0\ne f(a,y)$. Thus, we have proved
\begin{align}\label{d_a(b)=0-for-f(a_b)=0-and-b-ne-a}
	d_a(x)=0,\text{ if }f(a,x)=0\text{ and }x\ne -a.
\end{align}

Finally, taking $y=-a-x$ in \cref{half-der-in-terms-of-d_a} we see that the right-hand side is zero, while the left-hand side equals $2f(a,x)d_a(-a)$. Choosing $x\in A$ such that $f(a,x)\ne 0$, we show that $d_a(-a)=0$. Combining this with \cref{d_a(b)=0-for-f(a_b)-ne-0,d_a(b)=0-for-f(a_b)=0-and-b-ne-a}, we get the desired fact.
\end{proof}

\begin{lem}\label{vf_0-in-<id_af>}
	Let $\vf_0\in\Dl(L)$ satisfying \cref{vf_a(u_b)=d_a(u_b)u_(a+b)} with $a=0$. Then $\vf_0(x)=\vf_0(y)$ for all $x,y\ne 0$.
\end{lem}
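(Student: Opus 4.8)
The plan is to use the diagonal form $\vf_0(u_b)=d_0(b)u_b$ and read off from \cref{conditions-on-d-case-g=0} (with $a=0$ and $g=0$) the single scalar identity $2f(x,y)d_0(x+y)=f(x,y)\big(d_0(x)+d_0(y)\big)$, which gives
\begin{align*}
2d_0(x+y)=d_0(x)+d_0(y)\qquad\text{whenever }f(x,y)\ne 0.
\end{align*}
Note that $f(x,y)\ne 0$ forces $x$, $y$ and $x+y$ all to be non-zero (if $x+y=0$ then $f(x,y)=f(x,-x)=0$), so this is an identity among values of $d_0$ at non-zero arguments, which is consistent with what we must prove.

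The first step is to promote this to the symmetry $d_0(-x)=d_0(x)$ for $x\ne 0$. I would apply the displayed relation both to the pair $(x,y)$ and to the pair $(x+y,-y)$; the latter is admissible because $f(x+y,-y)=f(x,-y)+f(y,-y)=-f(x,y)\ne 0$ and $(x+y)+(-y)=x$. This yields $2d_0(x+y)=d_0(x)+d_0(y)$ and $2d_0(x)=d_0(x+y)+d_0(-y)$, and eliminating $d_0(x+y)$ gives
\begin{align*}
3d_0(x)=d_0(y)+2d_0(-y)\qquad\text{whenever }f(x,y)\ne 0.
\end{align*}
Since $f(-x,y)\ne 0\iff f(x,y)\ne 0$, replacing $x$ by $-x$ (and using that some $y$ with $f(x,y)\ne 0$ exists by non-degeneracy \eqref{f(a_A)=0-iff-a=0}) gives $3d_0(-x)=d_0(y)+2d_0(-y)=3d_0(x)$, so $d_0(-x)=d_0(x)$ because $\ch(F)=0$. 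Feeding this back into the previous display turns it into $3d_0(x)=3d_0(y)$, i.e. $d_0(x)=d_0(y)$ whenever $f(x,y)\ne 0$.

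It then remains to drop the hypothesis $f(x,y)\ne 0$, and this is where I would invoke \cref{f(a_c)-ne-0-ne-f(b_c)}: for arbitrary non-zero $x,y\in A$ the additive maps $f(x,\cdot)$ and $f(y,\cdot)$ are non-zero, so there is $z\in A$ with $f(x,z)\ne 0\ne f(y,z)$, and necessarily $z\ne 0$; hence $d_0(x)=d_0(z)=d_0(y)$. This shows $d_0$ is constant on $A\setminus\{0\}$, which is the assertion. The only genuinely delicate point is spotting the auxiliary pair $(x+y,-y)$ and the substitution $x\mapsto -x$ that together produce the symmetry $d_0(-x)=d_0(x)$; once that is in hand the rest is routine bookkeeping plus the connectivity argument via \cref{f(a_c)-ne-0-ne-f(b_c)}.
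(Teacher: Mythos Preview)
Your proof is correct and follows essentially the same route as the paper: derive $2d_0(x+y)=d_0(x)+d_0(y)$ when $f(x,y)\ne 0$, use the auxiliary pair $(x+y,-y)$ to get $3d_0(x)=d_0(y)+2d_0(-y)$, extract the symmetry $d_0(-\cdot)=d_0(\cdot)$ (you swap $x\leftrightarrow -x$, the paper swaps $y\leftrightarrow -y$, which is the same trick), conclude $d_0(x)=d_0(y)$ whenever $f(x,y)\ne 0$, and finish with the connectivity argument via \cref{f(a_c)-ne-0-ne-f(b_c)}. Incidentally, your computation $f(x+y,-y)=-f(x,y)$ is the correct sign; the paper's proof has a harmless sign slip there.
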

\begin{proof}
	Write \cref{half-der-in-terms-of-d_a} with $a=0$:
	\begin{align*}
		2f(x,y)d_0(x+y)=f(x,y)d_0(x)+f(x,y)d_0(y).
	\end{align*}
Consequently,
\begin{align}\label{2d_0(b+c)=d_0(b)+d_0(c)-if-f(b_c)-ne-0}
	2d_0(x+y)=d_0(x)+d_0(y),\text{ if }f(x,y)\ne 0.
\end{align}
Observe that $f(x,y)=f(x+y,-y)$, so applying \cref{2d_0(b+c)=d_0(b)+d_0(c)-if-f(b_c)-ne-0} with $(x,y)$ replaced by $(x+y,-y)$, we have
\begin{align}\label{2d_0(b)=d_0(b+c)+d_0(-c)-if-f(b_c)-ne-0}
	2d_0(x)=d_0(x+y)+d_0(-y),\text{ if }f(x,y)\ne 0.
\end{align}
Combining \cref{2d_0(b+c)=d_0(b)+d_0(c)-if-f(b_c)-ne-0,2d_0(b)=d_0(b+c)+d_0(-c)-if-f(b_c)-ne-0}, we come to
\begin{align}\label{3d_0(b)=d_0(c)+2d_0(-c)-if-f(b_c)-ne-0}
	3d_0(x)=d_0(y)+2d_0(-y),\text{ if }f(x,y)\ne 0.
\end{align}
However, $f(x,-y)=-f(x,y)$, so replacing $y$ by $-y$ in \cref{3d_0(b)=d_0(c)+2d_0(-c)-if-f(b_c)-ne-0}, we obtain
\begin{align}\label{3d_0(b)=d_0(-c)+2d_0(c)-if-f(b_c)-ne-0}
	3d_0(x)=d_0(-y)+2d_0(y),\text{ if }f(x,y)\ne 0.
\end{align}
It follows from \cref{3d_0(b)=d_0(c)+2d_0(-c)-if-f(b_c)-ne-0,3d_0(b)=d_0(-c)+2d_0(c)-if-f(b_c)-ne-0} that $d_0(y)=d_0(-y)$, so
\begin{align}\label{d_0(b)=d_0(c)-if-f(b_c)-ne-0}
	d_0(x)=d_0(y),\text{ if }f(x,y)\ne 0,
\end{align}
because $\mathrm{char}(F)=0$.

Now let $x,y\ne 0$. By \cref{f(a_c)-ne-0-ne-f(b_c)} applied to $f(x,-)$ and $f(y,-)$ there exists $z\in A$ such that $f(x,z)\ne 0\ne f(y,z)$. Then \cref{d_0(b)=d_0(c)-if-f(b_c)-ne-0} gives
\begin{align*}
	d_0(x)=d_0(z)=d_0(y),\text{ if }x,y\ne 0,
\end{align*}
as needed.
\end{proof}

\begin{lem}\label{af-in-DL(L)}
	The linear map $\af:L\to L$ given by
	\begin{align*}
		\af(u_a)=
		\begin{cases}
			u_0, & a=0,\\
			0, & a\ne 0,
		\end{cases}
	\end{align*}
is a $\frac 12$-derivation of $L$.
\end{lem}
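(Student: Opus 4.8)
The plan is to verify \cref{vf(xy)=half(vf(x)y+xvf(y))} directly for $\varphi=\af$ by checking it on all pairs of basis vectors $u_x,u_y$, splitting into cases according to whether $x$, $y$, and $x+y$ are zero. By anti-symmetry of $f$ and the defining identity \cref{[u_a_u_b]=(f(a_b)+g(a-b))u_(a+b)} with $g=0$, we have $[u_x,u_y]=f(x,y)u_{x+y}$, so the left-hand side of \cref{vf(xy)=half(vf(x)y+xvf(y))} is $f(x,y)\af(u_{x+y})$, which is nonzero only if $x+y=0$ and $f(x,y)\ne 0$; but $x+y=0$ forces $f(x,y)=f(x,-x)=0$, so the left-hand side always vanishes. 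Thus it suffices to show $[\af(u_x),u_y]+[u_x,\af(u_y)]=0$ for all $x,y$.

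First I would dispose of the generic case $x\ne 0$ and $y\ne 0$: then $\af(u_x)=\af(u_y)=0$, so both brackets on the right are zero. Next, the case $x=y=0$: then $\af(u_0)=u_0$, and $[\af(u_0),u_0]+[u_0,\af(u_0)]=2[u_0,u_0]=0$ since $f(0,0)=0$. Finally, the mixed case $x=0$, $y\ne 0$ (the case $x\ne 0$, $y=0$ being symmetric): here $\af(u_x)=u_0$ and $\af(u_y)=0$, so the right-hand side is $[u_0,u_y]=f(0,y)u_y=0$, again because $f$ is biadditive so $f(0,y)=0$. In every case both sides of \cref{vf(xy)=half(vf(x)y+xvf(y))} vanish, and by linearity $\af\in\Dl(L)$.

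There is no real obstacle here: the argument is a short case check, and the only inputs needed are the biadditivity and anti-symmetry of $f$ (so that $f(0,\cdot)=0$ and $f(x,-x)=0$) together with the assumption $g=0$. One could alternatively observe that $\af$ is simply the extension by zero of the trivial product on the one-dimensional complement spanned by $u_0$ in the sense of \cref{(a_1+a_2)-times-(b_1+b_2)}, using $Z(L)=\spn_F\{u_0\}$ and $[L,L]=\spn_F\{u_a\mid a\ne 0\}$ from \cref{Z(L)-and-[L_L]}, but the direct verification is equally quick and self-contained.
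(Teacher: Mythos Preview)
Your proof is correct. The paper's argument is essentially the structural alternative you mention at the end: it observes that $\af(L)\subseteq Z(L)$ (so the right-hand side of \cref{vf(xy)=half(vf(x)y+xvf(y))} vanishes) and that $\af([L,L])=\{0\}$ (so the left-hand side vanishes), invoking \cref{Z(L)-and-[L_L]} directly rather than recomputing $f(0,y)=0$ and $f(x,-x)=0$ by hand. Your direct case check is equally valid and more self-contained; the paper's version has the advantage of being reusable verbatim in the $g\ne 0$ setting (\cref{af_(a_b)-in-D(L)}), where the analogous map $\af_{(a,b)}$ again has image in $Z(L)$ and kills $[L,L]$ but the case analysis would look different.
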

\begin{proof}
	Observe by \cref{Z(L)-and-[L_L]}\cref{item-Z(L)} that $\af(L)\sst Z(L)$, so the right-hand side of \cref{vf(xy)=half(vf(x)y+xvf(y))} is always zero for $\vf=\af$. Now, $\af([L,L])=\{0\}$ by \cref{Z(L)-and-[L_L]}\cref{item-[L_L]}. Thus, the left-hand side of \cref{vf(xy)=half(vf(x)y+xvf(y))} is always zero as well.
\end{proof}

\begin{prop}\label{descr-Dl(L)}
	We have $\Dl(L)=\spn_F\{\id,\af\}$.
\end{prop}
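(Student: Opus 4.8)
The plan is simply to assemble the three lemmas just established; at this point no genuine computation remains. For one inclusion I would observe that $\id$ is the trivial $\frac12$-derivation given by multiplication by $1\in F$, that $\af\in\Dl(L)$ by \cref{af-in-DL(L)}, and that $\Dl(L)$ is an $F$-linear subspace of the space of linear endomorphisms of $L$; consequently $\spn_F\{\id,\af\}\sst\Dl(L)$.

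For the reverse inclusion I would take an arbitrary $\vf\in\Dl(L)$ and use the $A$-grading to decompose it as $\vf=\sum_{a\in A}\vf_a$, where each $\vf_a\in\Dl(L)$ satisfies \cref{vf_a(u_b)=d_a(u_b)u_(a+b)}. Then \cref{d_a=zero-for-a-ne-0} annihilates every homogeneous component of nonzero degree, so $\vf=\vf_0$, say $\vf_0(u_b)=d_0(b)u_b$. By \cref{vf_0-in-<id_af>} there is a scalar $k\in F$ with $d_0(b)=k$ for all $b\ne 0$; setting $\ell:=d_0(0)$, one checks on the basis $\{u_a\}_{a\in A}$ that $\vf_0=k\,\id+(\ell-k)\af$, since both maps send $u_0\mapsto\ell u_0$ and $u_b\mapsto k u_b$ for every $b\ne 0$. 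Hence $\vf\in\spn_F\{\id,\af\}$, which completes the argument.

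Since the whole computational weight has already been carried by \cref{d_a=zero-for-a-ne-0,vf_0-in-<id_af>,af-in-DL(L)}, there is no real obstacle in this proposition; it is essentially a repackaging of those results. The one point worth a second glance is that \cref{vf_0-in-<id_af>} leaves the value $d_0(0)$ entirely unconstrained, which is exactly why a two-parameter family appears — and hence the two-dimensional span $\spn_F\{\id,\af\}$, the maps $\id$ and $\af$ being linearly independent because $A$ is non-trivial — rather than a one-parameter one.
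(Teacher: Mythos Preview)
Your proof is correct and follows exactly the same route as the paper's own argument: both inclusions are obtained by invoking \cref{d_a=zero-for-a-ne-0,vf_0-in-<id_af>,af-in-DL(L)}, and you simply spell out the final identification $\vf_0=k\,\id+(\ell-k)\af$ that the paper leaves implicit. There is nothing to add or correct.
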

\begin{proof}
	The inclusion $\Dl(L)\sst\spn_F\{\id,\af\}$ is \cref{d_a=zero-for-a-ne-0,vf_0-in-<id_af>}. The converse inclusion is \cref{af-in-DL(L)}.
\end{proof}

\begin{thrm}\label{descr-TP-on-L-g=0}
Let $\mathrm{char}(F)=0$ and $f$ be non-degenerate. Then there is only one non-trivial transposed Poisson algebra structure $\cdot$ on $L(A,0,f)$. It is given by
\begin{align}\label{u_0-cdot-u_0=u_0}
	u_0\cdot u_0=u_0.
\end{align}
\end{thrm}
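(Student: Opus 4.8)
The plan is to read off all transposed Poisson structures on $L=L(A,0,f)$ directly from \cref{glavlem} together with \cref{descr-Dl(L)}. By \cref{glavlem}, a commutative associative product $\cdot$ on $L$ is a transposed Poisson structure if and only if, for every $z\in L$, the left multiplication $x\mapsto z\cdot x$ lies in $\Dl(L)=\spn_F\{\id,\af\}$. As this multiplication is $F$-linear in $z$, there are $F$-linear functionals $\lb,\mu\colon L\to F$ with
\begin{align*}
	z\cdot x=\lb(z)\,x+\mu(z)\,\af(x)\qquad\text{for all }x,z\in L,
\end{align*}
so the task reduces to deciding which pairs $(\lb,\mu)$ yield a commutative associative product.

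First I would impose commutativity. Since $A$ is torsion-free and non-trivial, it is infinite, so one may choose $a\ne b$ with $a,b\ne 0$; as $\af(u_a)=\af(u_b)=0$, the equality $u_a\cdot u_b=u_b\cdot u_a$ reads $\lb(u_a)u_b=\lb(u_b)u_a$, forcing $\lb(u_a)=0$ for all $a\ne 0$. Next, for $a\ne 0$, the equality $u_a\cdot u_0=u_0\cdot u_a$ reads $(\lb(u_a)+\mu(u_a))u_0=\lb(u_0)u_a$, forcing $\lb(u_0)=0$ and, together with the previous step, $\mu(u_a)=0$. Hence $\lb\equiv 0$ and $\mu$ is supported on $u_0$, i.e.\ $z\cdot x=c\,z_0x_0u_0$ where $z=\sum_a z_au_a$, $x=\sum_a x_au_a$ and $c:=\mu(u_0)$; equivalently, $u_0\cdot u_0=c\,u_0$ and every other product of basis elements vanishes.

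Conversely, for each $c\in F$ the product so defined is commutative, and it is associative because the only nonzero product of three basis elements is $(u_0\cdot u_0)\cdot u_0=u_0\cdot(u_0\cdot u_0)=c^2u_0$; thus it is a transposed Poisson structure, trivial precisely when $c=0$. To finish, for $c\ne 0$ I would note that the linear map fixing $u_a$ for $a\ne 0$ and sending $u_0\mapsto c\m u_0$ is an automorphism of the Lie algebra $L$: it preserves $Z(L)=\spn_F\{u_0\}$ (\cref{Z(L)-and-[L_L]}) and every bracket $[u_a,u_b]=f(a,b)u_{a+b}$, the case $a+b=0$ being harmless since then $f(a,b)=f(a,-a)=0$; and it carries the structure $u_0\cdot u_0=u_0$ of \cref{u_0-cdot-u_0=u_0} to $u_0\cdot u_0=c\,u_0$. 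Hence all non-trivial structures are mutually isomorphic, which is the assertion.

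There is no genuine obstacle in this argument: the substantive work is already contained in the computation of $\Dl(L)$ in \cref{descr-Dl(L)}, and what remains is elementary linear algebra. The only point calling for a little care is the last step: a priori one obtains a one-parameter family $u_0\cdot u_0=c\,u_0$ ($c\ne 0$), and one must recognise — via the central-scaling automorphism above — that all of its members are isomorphic, so that ``only one'' is understood up to isomorphism.
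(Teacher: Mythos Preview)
Your proof is correct and follows essentially the same route as the paper: both invoke \cref{glavlem} and \cref{descr-Dl(L)} to write $u_a\cdot u_b=k_au_b+l_a\af(u_b)$, then use commutativity on pairs $(u_a,u_b)$ with $a\ne b$ nonzero and on $(u_a,u_0)$ to force all $k_a$ and all $l_a$ with $a\ne 0$ to vanish, leaving only $u_0\cdot u_0=l_0u_0$. The only differences are cosmetic: you phrase the coefficients as linear functionals $\lb,\mu$ rather than scalars indexed by $a$, you verify associativity directly rather than appealing to the extension-by-zero construction \cref{(a_1+a_2)-times-(b_1+b_2)}, and you spell out the central-scaling automorphism where the paper simply says ``up to isomorphism''.
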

\begin{proof}
	Let $\cdot$ be a non-trivial transposed Poisson algebra structure on $L(A,0,f)$. By \cref{descr-Dl(L),glavlem} for any $a\in A$ there are $k_a,l_a\in F$ such that
	\begin{align}\label{u_a-cdot-u_b-general}
		u_a\cdot u_b=k_a u_b+l_a\af(u_b)=
		\begin{cases}
			(k_a+l_a)u_0, & b=0,\\
			k_a u_b, & b\ne 0.
		\end{cases}
	\end{align}
Since $|A|>2$ ($A$ is torsion-free), for any $a\ne 0$ there exists $b\not\in\{0,a\}$. Then by \cref{u_a-cdot-u_b-general} and commutativity of $\cdot$ we have $k_a u_b=u_a\cdot u_b=u_b\cdot u_a=k_b u_a$. Consequently, $k_a=0$ for $a\ne 0$. Similarly, $(k_a+l_a)u_0=u_a\cdot u_0=u_0\cdot u_a=k_0 u_a$ gives $k_0=l_a=0$ for $a\ne 0$. Thus, the only non-zero product $u_a\cdot u_b$ is $u_0\cdot u_0=l_0u_0$. So, up to isomorphism, $\cdot$ is of the form \cref{u_0-cdot-u_0=u_0}.

Conversely, in view of \cref{Z(L)-and-[L_L]} the product \cref{u_0-cdot-u_0=u_0} is of the form \cref{(a_1+a_2)-times-(b_1+b_2)}, so $(L,\cdot,[\cdot,\cdot])$ is a transposed Poisson (and usual Poisson) algebra.
\end{proof}

\begin{rem}
	Consider $\B(0)$ as the complex Block algebra $L(\Z\times\Z,0,f)$, where $f$ is given by \cref{g-and-f-for-B(q)}. Then we obtain the description of transposed Poisson algebra structures on $\B(0)$ given in \cite[Theorem 2.14]{kk22} as a particular case of \cref{descr-TP-on-L-g=0}.
\end{rem}

\subsubsection{The case $g\ne 0$}

In this case, as it was commented above, there exists an additive map $h:A\to F$ such that \cref{f(a_b)=g(a)h(b)-g(b)h(a)} holds.

\begin{lem}\label{d_a-for-g(b)f(a_b)-ne-0}
	Let $a\ne 0$ and $\vf_a\in\Dl(L)$ satisfying \cref{vf_a(u_b)=d_a(u_b)u_(a+b)}. If $g(x)f(a,x)\ne 0$, then $\vf_a(x)=0$.
\end{lem}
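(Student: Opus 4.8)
The plan is to work directly with the defining identity \cref{half-der-in-terms-of-d_a} from \cref{conditions-on-d-case-g=0} and extract enough linear relations among the values $d_a(x)$ to force $d_a(x)=0$ whenever $g(x)f(a,x)\ne 0$. Since $g\ne 0$, I will use \cref{f(a_b)=g(a)h(b)-g(b)h(a)} to rewrite $f(a,b)+g(a-b)=g(a)(h(b)+1)-g(b)(h(a)+1)$, which makes the coefficients in \cref{half-der-in-terms-of-d_a} homogeneous bilinear expressions in $g$ and $h+1$; this is the algebraic simplification that should make the substitutions transparent.

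The key steps, in order: first substitute $y=-x$ into \cref{half-der-in-terms-of-d_a} and use anti-symmetry to get a relation between $d_a(x)$ and $d_a(-x)$ valid under a non-vanishing hypothesis (paralleling \cref{d_a(-b)=-d_a(b)-for-f(a_b)-ne-0}); then substitute special values such as $y=-a$, $y=a$, and $y=-a-x$ to produce two-term recursions expressing $d_a(a+x)$ in terms of $d_a(x)$, and a relation tying $d_a(x)$ to $d_a(a)$. Chaining these recursions under the standing assumption $g(x)f(a,x)\ne 0$ (which guarantees the relevant coefficients do not degenerate, since $g(x)\ne 0$ means $x$ is not central-type and $f(a,x)\ne 0$ controls the ``Witt part'') should yield an overdetermined system: as in the proof of \cref{d_a=zero-for-a-ne-0}, combining the relation from $y=-x$ with the one obtained after replacing $x$ by $-x$ will give both $d_a(-x)=-d_a(x)$ and $d_a(-x)=d_a(x)$, forcing $d_a(x)=0$ because $\ch(F)=0$. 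I would then translate $d_a(x)=0$ back into $\vf_a(u_x)=0$ via \cref{vf_a(u_b)=d_a(u_b)u_(a+b)}.

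The main obstacle I anticipate is bookkeeping: unlike the $g=0$ case, the coefficient $f(x,y)+g(x-y)$ no longer vanishes merely when $y=-x$, so the ``clean'' substitutions that worked before now leave residual $g$-terms, and one must be careful that the elements $y$ chosen to make a coefficient nonzero can be chosen \emph{simultaneously} to make $g(y)$, $f(a,y)$, $g(a+x+y)$, etc., nonzero as needed. This is exactly where \cref{f(a_c)-ne-0-ne-f(b_c)} (applied to suitable pairs of additive functions among $g$, $h$, $f(a,-)$, $f(a+x,-)$) does the heavy lifting, and I expect to invoke it several times to guarantee the existence of a single $y$ avoiding a finite list of ``bad'' subgroups. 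A secondary point to watch is that the hypothesis here is only $g(x)f(a,x)\ne 0$ (not a conclusion about all $x$), so this lemma is one ingredient among several; the remaining cases ($g(x)=0$ or $f(a,x)=0$) will be handled in subsequent lemmas, and I should state the conclusion precisely as $\vf_a(u_x)=0$ under that single hypothesis without attempting more.
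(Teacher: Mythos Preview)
Your proposal is a sketch rather than a proof, and the sketch underestimates the difficulty in one concrete place: the role of $d_a(0)$. In the $g=0$ case the substitution $y=-x$ in \cref{half-der-in-terms-of-d_a} gives $0=-f(a,x)(d_a(x)+d_a(-x))$, a pure relation between $d_a(x)$ and $d_a(-x)$. With $g\ne 0$ this no longer happens: $y=-x$ yields
\[
4g(x)\,d_a(0)=\bigl(-f(a,x)+g(a+2x)\bigr)d_a(x)+\bigl(-f(a,x)+g(2x-a)\bigr)d_a(-x),
\]
and replacing $x$ by $-x$ simply reproduces the same equation (check the sign flips), so you do \emph{not} obtain the pair $d_a(-x)=\pm d_a(x)$ from this substitution alone. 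Every relation you can write down from $y\in\{0,\pm x,\pm a,\pm a\pm x\}$ drags $d_a(0)$ (and several neighbours) along, and you give no mechanism for eliminating it.

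The paper's proof is organised around exactly this point: it first combines $y=0$ (which you omit) with $y=-x$ to produce the scalar identity $(3g(x)-f(a,x))d_a(0)=0$, then splits on whether $f(a,x)=3g(x)$. The second branch is not trivial; it needs a further substitution $y=2x$ and a polynomial identity that reduces to $36g(x)^4 d_a(0)=0$. Only after $d_a(0)=0$ is known does the relation from $y=0$ (namely $g(a-x)d_a(x)=(f(a,x)+g(a-x))d_a(0)$) give $d_a(x)=0$, with a separate argument when $g(a)=g(x)$. Your plan never isolates $d_a(0)$ and never anticipates the $f(a,x)=3g(x)$ degeneracy, so as written it cannot close. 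Also, \cref{f(a_c)-ne-0-ne-f(b_c)} plays no role in this lemma; it is used in the \emph{next} lemma, where $g(x)f(a,x)=0$ and one genuinely needs to manufacture a good auxiliary $y$.
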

\begin{proof}
	Consider first $y=0$ in \cref{half-der-in-terms-of-d_a}
	\begin{align*}
		2g(x)d_a(x)=g(a+x)d_a(x)+(f(x,a)+g(x-a))d_a(0).
	\end{align*}
Then
\begin{align}\label{g(a-b)d_a(b)-is-(f(a_b)+g(a-b))d_a(0)}
	g(a-x)d_a(x)=(f(a,x)+g(a-x))d_a(0).
\end{align}
Replacing $x$ by $-x$, we obtain
\begin{align}\label{g(a+b)d_a(-b)-is-(-f(a_b)+g(a+b))d_a(0)}
	g(a+x)d_a(-x)=(-f(a,x)+g(a+x))d_a(0).
\end{align}
On the other hand, $y=-x$ in \cref{half-der-in-terms-of-d_a} gives
\begin{align}\label{4g(b)d_a(0)-is-(-f(a_b)+g(a+2b))d_a(b)+(-f(a_b)+g(2b-a))d_a(-b)}
	4g(x)d_a(0)=(-f(a,x)+g(a+2x))d_a(x)+(-f(a,x)+g(2x-a))d_a(-x).
\end{align}
Multiplying both sides of \cref{4g(b)d_a(0)-is-(-f(a_b)+g(a+2b))d_a(b)+(-f(a_b)+g(2b-a))d_a(-b)} by $g(a-x)g(a+x)=g(a)^2-g(x)^2$ and using \cref{g(a+b)d_a(-b)-is-(-f(a_b)+g(a+b))d_a(0),g(a-b)d_a(b)-is-(f(a_b)+g(a-b))d_a(0)}, we get
\begin{align}
	4g(x)(g(a)^2-g(x)^2)d_a(0)&=(-f(a,x)+g(a+2x))g(a+x)(f(a,x)+g(a-x))d_a(0)\notag\\
	&\quad+(-f(a,x)+g(2x-a))g(a-x)(-f(a,x)+g(a+x))d_a(0).\label{4g(b)(g(a)^2-g(b)^2)d_a(0)=...}
\end{align}
We have
\begin{align*}
	(-f(a,x)+g(a+2x))(f(a,x)+g(a-x))&=(g(a)+g(x)/2)^2-(f(a,x)-3g(x)/2)^2,\\
	(-f(a,x)+g(2x-a))(-f(a,x)+g(a+x))&=(f(a,x)-3g(x)/2)^2-(g(a)-g(x)/2)^2.
\end{align*}
Since
\begin{align*}
	(g(a)+g(x)/2)^2-(g(a)-g(x)/2)^2&=2g(a)g(x),\\
	(g(a)+g(x)/2)^2+(g(a)-g(x)/2)^2&=2g(a)^2+g(x)^2/2,
\end{align*}
the coefficient of $d_a(0)$ on the right-hand side of \cref{4g(b)(g(a)^2-g(b)^2)d_a(0)=...} equals
\begin{align*}
	&g(a)\cdot 2g(a)g(x)+g(x)(2g(a)^2+g(x)^2/2)-2g(x)(f(a,x)-3g(x)/2)^2\\
	&=g(x)(4g(a)^2+g(x)^2/2-2(f(a,x)-3g(x)/2)^2).
\end{align*}
Subtracting the coefficient of $d_a(0)$ on the left-hand side of \cref{4g(b)(g(a)^2-g(b)^2)d_a(0)=...}, we obtain
\begin{align*}
	g(x)(9g(x)^2/2-2(f(a,x)-3g(x)/2)^2)=2g(x)f(a,x)(3g(x)-f(a,x)).
\end{align*}
Thus, under the assumption $g(x)f(a,x)\ne 0$, \cref{4g(b)(g(a)^2-g(b)^2)d_a(0)=...} is equivalent to
\begin{align}\label{g(b)f(a_b)half-der-in-terms-of-d_a}
	(3g(x)-f(a,x))d_a(0)=0.
\end{align}

\textit{Case 1.} $f(a,x)\ne 3g(x)$. Then \cref{g(b)f(a_b)half-der-in-terms-of-d_a} gives
\begin{align}\label{d_a(0)-is-0}
	d_a(0)=0.
\end{align}

\textit{Case 1.1.} $g(a)\ne g(x)$. It follows from \cref{d_a(0)-is-0,g(a-b)d_a(b)-is-(f(a_b)+g(a-b))d_a(0)} that $d_a(x)=0$.

\textit{Case 1.2.} $g(a)=g(x)$. Then $g(a+x)=2g(x)\ne 0$, so $d_g(-x)=0$ by \cref{g(a+b)d_a(-b)-is-(-f(a_b)+g(a+b))d_a(0)}. Moreover, $-f(a,x)+g(a+2x)=-f(a,x)+3g(x)\ne 0$, so \cref{d_a(0)-is-0,4g(b)d_a(0)-is-(-f(a_b)+g(a+2b))d_a(b)+(-f(a_b)+g(2b-a))d_a(-b)} yield
$d_a(x)=0$.

\textit{Case 2.} $f(a,x)=3g(x)\ne 0$. Then \cref{g(a-b)d_a(b)-is-(f(a_b)+g(a-b))d_a(0)} becomes
\begin{align}\label{g(a-b)d_a(b)-is-(2g(b)+g(a))d_a(0)}
	g(a-x)d_a(x)=(3g(x)+g(a-x))d_a(0)=(2g(x)+g(a))d_a(0).
\end{align}
Since $f(a,x)=3g(x)$ is invariant under the replacement of $x$ by $kx$, then \cref{g(a-b)d_a(b)-is-(2g(b)+g(a))d_a(0)} implies
\begin{align}\label{g(a-kb)d_a(b)-is-(2kg(b)+g(a))d_a(0)}
	g(a-kx)d_a(kx)=(2kg(x)+g(a))d_a(0).
\end{align}
On the other hand, $y=2x$ in \cref{half-der-in-terms-of-d_a} gives
\begin{align}\label{-2g(b)d_a(3b)-is-(5g(b)+g(a))d_a(b)-(4g(b)+g(a))d_a(2b)}
	-2g(x)d_a(3x)=(5g(x)+g(a))d_a(x)-(4g(x)+g(a))d_a(2x).
\end{align}
Multiplying both sides of this equality by $g(a-x)g(a-2x)g(a-3x)$ and using \cref{g(a-kb)d_a(b)-is-(2kg(b)+g(a))d_a(0)} we get
\begin{align}
	&-2g(x)g(a-x)g(a-2x)(6g(x)+g(a))d_a(0)\notag\\
	&=(5g(x)+g(a))g(a-2x)g(a-3x)(2g(x)+g(a))d_a(0)\notag\\
	&\quad-(4g(x)+g(a))g(a-x)g(a-3x)(4g(x)+g(a))d_a(0).\label{-2g(b)g(a-b)g(a-2b)(6g(b)+g(a))d_a(0)=...}
\end{align}
Comparing the coefficients of $g(a)^ig(x)^jd_a(0)$, $0\le i+j\le 4$, in \cref{-2g(b)g(a-b)g(a-2b)(6g(b)+g(a))d_a(0)=...}, we see that \cref{-2g(b)g(a-b)g(a-2b)(6g(b)+g(a))d_a(0)=...} is equivalent to $36g(x)^4d_a(0)=0$. Hence, we again have \cref{d_a(0)-is-0}.

\textit{Case 2.1.} $g(a)\ne g(x)$. Then \cref{d_a(0)-is-0,g(a-b)d_a(b)-is-(f(a_b)+g(a-b))d_a(0)} yield $d_a(x)=0$.

\textit{Case 2.2.} $g(a)=g(x)$. Then $g(a-kx)=(1-k)g(x)\ne 0$ for $k\ne 1$, so $d_a(2x)=d_a(3x)=0$ by \cref{g(a-kb)d_a(b)-is-(2kg(b)+g(a))d_a(0)}. Moreover, $5g(x)+g(a)=6g(x)\ne 0$, so \cref{d_a(0)-is-0,-2g(b)d_a(3b)-is-(5g(b)+g(a))d_a(b)-(4g(b)+g(a))d_a(2b)} imply
$d_a(x)=0$.
\end{proof}

\begin{lem}\label{d_a-for-g(b)f(a_b)=0}
	Let $a\ne 0$ and $\vf_a\in\Dl(L)$ satisfying \cref{vf_a(u_b)=d_a(u_b)u_(a+b)}. If $g(x)f(a,x)=0$, then $d_a(x)=0$, unless $g(a)=g(x)=0$, $h(a)=1$ and $h(x)=-2$.
\end{lem}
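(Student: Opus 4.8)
The plan is to use only relation \cref{half-der-in-terms-of-d_a}, the already-established \cref{d_a-for-g(b)f(a_b)-ne-0} (which kills $d_a(z)$ whenever $g(z)f(a,z)\ne 0$), and the fact that $g$ and $h$ are linearly independent over $F$ --- otherwise $h$ would be a scalar multiple of $g$ and \cref{f(a_b)=g(a)h(b)-g(b)h(a)} would make $f$ vanish identically, contradicting non-degeneracy. Plugging \cref{f(a_b)=g(a)h(b)-g(b)h(a)} into \cref{half-der-in-terms-of-d_a}, the coefficient of $d_a(x)$ for a pair $(x,y)$ becomes $f(a+x,y)+g(a+x-y)=\ell(y)+g(a)+g(x)$, where $\ell:=(g(a)+g(x))\,h-(h(a)+h(x)+1)\,g$. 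The strategy is then: find $y$ for which $g(y)f(a,y)\ne 0$ and $g(x+y)f(a,x+y)\ne 0$, so that $d_a(y)=d_a(x+y)=0$ by \cref{d_a-for-g(b)f(a_b)-ne-0}, and for which $\ell(y)+g(a)+g(x)\ne 0$; then \cref{half-der-in-terms-of-d_a} degenerates to $d_a(x)=0$.

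First I would dispose of the case $\ell\ne 0$. Using \cref{f(a_c)-ne-0-ne-f(b_c)} (applied to $g$ and $f(a,\cdot)$, then correcting by a suitable integer multiple of some $c'$ with $\ell(c')\ne 0$) choose $c\in A$ with $g(c)$, $f(a,c)$, $\ell(c)$ all non-zero. Since $g(x)f(a,x)=0$, the quantities $g(x+kc)=g(x)+kg(c)$, $f(a,x+kc)=f(a,x)+kf(a,c)$ and $\ell(kc)+g(a)+g(x)=k\ell(c)+g(a)+g(x)$ each vanish for at most one integer $k$; as $\mathrm{char}(F)=0$, some $k\ge 1$ avoids all three, and after replacing $c$ by $kc$ we may assume in addition that $g(x+c)$, $f(a,x+c)$ and $\ell(c)+g(a)+g(x)$ are non-zero. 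By \cref{d_a-for-g(b)f(a_b)-ne-0} we then have $d_a(c)=d_a(x+c)=0$, and \cref{half-der-in-terms-of-d_a} with $y=c$ forces $d_a(x)=0$.

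Next I would analyse $\ell=0$. By linear independence of $g$ and $h$ this means $g(a)+g(x)=0$ and $h(a)+h(x)+1=0$. If $g(x)\ne 0$, then $g(x)f(a,x)=0$ gives $f(a,x)=0$, whereas substituting $g(a)=-g(x)$ and $h(x)=-h(a)-1$ into \cref{f(a_b)=g(a)h(b)-g(b)h(a)} yields $f(a,x)=g(x)\ne 0$ --- a contradiction. Hence $g(x)=0$, so $g(a)=0$; then $f(a,\cdot)=-h(a)\,g$ is non-zero (since $a\ne 0$ and $f$ is non-degenerate), i.e.\ $h(a)\ne 0$, and $h(x)=-h(a)-1$. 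If also $h(a)=1$, then $h(x)=-2$ and this is exactly the exceptional configuration allowed by the statement. In the remaining subcase $g(a)=g(x)=0$, $h(a)\notin\{0,1\}$, $h(x)=-h(a)-1$, the coefficient $\ell(y)+g(a)+g(x)$ of $d_a(x)$ is identically zero, so $x$ must be reached as the \emph{left-hand} argument of \cref{half-der-in-terms-of-d_a}. I would pick $p\in A$ with $g(p)\ne 0$ (possible since $g\ne 0$) and apply \cref{half-der-in-terms-of-d_a} to the pair $(p,\,x-p)$. Using $g(a)=g(x)=0$ and $h(a)+h(x)=-1$, the left-hand coefficient equals $2g(p)(1-h(a))$, while $g(p)f(a,p)=-h(a)g(p)^2\ne 0$ and $g(x-p)f(a,x-p)=-h(a)g(p)^2\ne 0$, so $d_a(p)=d_a(x-p)=0$ by \cref{d_a-for-g(b)f(a_b)-ne-0}; the equation then collapses to $2g(p)(1-h(a))d_a(x)=0$, whence $d_a(x)=0$ because $g(p)\ne 0$, $h(a)\ne 1$ and $\mathrm{char}(F)=0$.

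The hard part is the bookkeeping around the coefficient $\ell(y)+g(a)+g(x)$: one has to see that outside the exceptional configuration $y$ can always be chosen to make it non-zero (the generic choice plus rescaling), and recognise that precisely in the near-exceptional case $g(a)=g(x)=0$ this coefficient degenerates identically for every $y$, which forces the structurally different substitution $(p,\,x-p)$ bringing $d_a(x)$ onto the left-hand side of \cref{half-der-in-terms-of-d_a}.
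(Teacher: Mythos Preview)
Your argument is correct and follows the same overall strategy as the paper: kill $d_a$ at two auxiliary points via \cref{d_a-for-g(b)f(a_b)-ne-0}, then read off $d_a(x)=0$ from \cref{half-der-in-terms-of-d_a}. The organization differs, and yours is a bit cleaner. The paper splits into Case~1 ($g(a+x)\ne 0$), Case~2.1 ($g(a+x)=0$, $h(a+x)+1\ne 0$) and Case~2.2 ($g(a+x)=0$, $h(a+x)+1=0$); your single functional $\ell=(g(a)+g(x))h-(h(a)+h(x)+1)g$ together with the observation that $g$ and $h$ are $F$-linearly independent merges the first two cases into ``$\ell\ne 0$'' in one stroke. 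In the degenerate case $\ell=0$ (the paper's Case~2.2) you go straight to \cref{d_a-for-g(b)f(a_b)-ne-0} to get $d_a(p)=d_a(x-p)=0$ (using $f(a,\cdot)=-h(a)g$ with $h(a)\ne 0$) and then the substitution $(p,x-p)$; the paper instead rederives $d_a(y)=0$ for $g(y)\ne 0$ via an intermediate step through $d_a(0)$ before making the same final substitution. Both routes end with the same coefficient $g(p)(h(x)+2)$ on $d_a(x)$, so the exceptional set is identified identically.
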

\begin{proof}
	\textit{Case 1.} $g(a+x)\ne 0$. Since $a\ne 0$, by \cref{f(a_c)-ne-0-ne-f(b_c)} applied to $f(a,-)$ and $g$ there exists $y\in A$ such that $f(a,y)\ne 0\ne g(y)$. Observe that for any $k\ne 0$ we have
	\begin{align}\label{f(a_c)-ne-0-ne-g(c)}
		f(a,ky)\ne 0\ne g(ky),
	\end{align}
	because $\ch(F)=0$. We affirm that $k$ can be chosen in a way that
	\begin{align}\label{f(a_b+kc)-ne-0-ne-f(a_kc)+g(a)+g(kc)}
		f(a,x+ky)\ne 0\ne f(a+x,ky)+g(a+x-ky).
	\end{align}
	Indeed, $f(a,y)\ne 0$, so there exists at most one $k$ such that $f(a,x)+kf(a,y)=0$. If $f(a+x,y)-g(y)=0$, then $f(a+x,ky)+g(a+x-ky)=g(a+x)\ne 0$ for all $k$. Otherwise, there exists at most one $k$ such that $k(f(a+x,y)-g(y))+g(a+x)=0$. Thus, \cref{f(a_b+kc)-ne-0-ne-f(a_kc)+g(a)+g(kc),f(a_c)-ne-0-ne-g(c)} hold for infinitely many integer $k\ne 0$ (recall that $\ch(F)=0$). Then $d_a(ky)=d_a(x+ky)=0$ for any such $k$ by \cref{d_a-for-g(b)f(a_b)-ne-0}. Consequently, applying \cref{half-der-in-terms-of-d_a} with $y$ replaced by $ky$ and using \cref{f(a_b+kc)-ne-0-ne-f(a_kc)+g(a)+g(kc)} we prove $d_a(x)=0$.
	
	\textit{Case 2.} $g(a+x)=0$. Then 
	\begin{align*}
		f(a+x,y)+g(a+x-y)=-g(y)(h(a+x)+1).
	\end{align*}

	\textit{Case 2.1.} $h(a+x)+1\ne 0$. Then the same argument as in Case 1 gives $d_a(x)=0$. 
	
	\textit{Case 2.2.} $h(a+x)+1=0$. Then $0=g(x)f(a,x)=g(a)^2=g(x)^2$, so \cref{half-der-in-terms-of-d_a} simplifies to
	\begin{align*}
		2g(y)h(a)d_a(x+y)=g(y)h(a)d_a(y).
	\end{align*}
Observe that $h(a)\ne 0$, since otherwise $f(a,y)=0$ for all $y\in A$ contradicting \cref{f(a_A)=0-iff-a=0}. Hence,
\begin{align}\label{2d_a(b+c)-is-d_a(c)}
	2d_a(x+y)=d_a(y),\text{ if }g(y)\ne 0.
\end{align}
On the other hand, since $g(a)=0$, then substituting $x=0$ in \cref{half-der-in-terms-of-d_a} we get
\begin{align}\label{d_a(c)-is-(h(a)+1)d_a(0)}
	d_a(y)=(h(a)+1)d_a(0),\text{ if }g(y)\ne 0.
\end{align}
Observe that $g(x+y)=g(y)$ whenever $g(x)=0$, so \cref{d_a(c)-is-(h(a)+1)d_a(0),2d_a(b+c)-is-d_a(c)} yield
\begin{align*}
	2(h(a)+1)d_a(0)=(h(a)+1)d_a(0).
\end{align*}
It follows that $d_a(0)=0$, unless $h(a)+1=0$. In any case, \cref{d_a(c)-is-(h(a)+1)d_a(0)} implies
\begin{align}\label{d_a(c)-is-zero}
	d_a(y)=0,\text{ if }g(y)\ne 0.
\end{align}
Write \cref{half-der-in-terms-of-d_a} replacing $x$ by $x-y$, where $g(y)\ne 0$. Since $d_a(y)=d_a(x-y)=0$ by \cref{d_a(c)-is-zero}, we come to
\begin{align*}
	(h(x)+2)d_a(x)=0.
\end{align*}
Thus, $d_a(x)=0$, unless $h(x)=-2$ (in which case $h(a)=1$).
\end{proof}

\begin{lem}\label{vf_0-in-<id>}
	Let $\vf_0\in\Dl(L)$ satisfying \cref{vf_a(u_b)=d_a(u_b)u_(a+b)} with $a=0$. Then $\vf_0\in\spn_F\{\id\}$.
\end{lem}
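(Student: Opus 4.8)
The goal is to show that the only $\frac12$-derivations of $L=L(A,g,f)$ (with $g\ne 0$, $f$ non-degenerate) concentrated in degree $0$ are scalar multiples of the identity. Fix $\vf_0\in\Dl(L)$ with $\vf_0(u_b)=d_0(u_b)u_b$ for all $b\in A$, i.e.\ $d_0(b)$ is simply the eigenvalue of $\vf_0$ on $u_b$. By \cref{conditions-on-d-case-g=0} applied with $a=0$, $d_0$ satisfies, for all $x,y\in A$,
\begin{align*}
	2(f(x,y)+g(x-y))d_0(x+y)=(f(x,y)+g(x-y))d_0(x)+(f(x,y)+g(x-y))d_0(y),
\end{align*}
wait --- this is \emph{not} quite right: with $a=0$ the coefficients on the right become $f(x,y)+g(x-y)$ on \emph{both} terms only because $a=0$, so in fact the displayed identity reads
\begin{align*}
	(f(x,y)+g(x-y))\bigl(2d_0(x+y)-d_0(x)-d_0(y)\bigr)=0.
\end{align*}
Thus the content of the hypothesis is: whenever $[u_x,u_y]\ne 0$, one has $2d_0(x+y)=d_0(x)+d_0(y)$, i.e.\ $d_0$ is ``midpoint-additive along every nonzero bracket''. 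The plan is to extract genuine additivity of a suitable normalization of $d_0$ from this, then use the structure of $g$ and $h$ to pin $d_0$ down to a constant.

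\textbf{Step 1: reduce to midpoint-additivity and then to additivity of $d_0-d_0(0)$.} First note $x=y=0$ is no help (the coefficient vanishes if $g=0$ there, but $f(0,0)+g(0)=0$ always), so one works with $x,y$ for which $f(x,y)+g(x-y)\ne 0$. Taking $y=0$ whenever $g(x)\ne 0$ gives $2d_0(x)=d_0(x)+d_0(0)$, hence $d_0(x)=d_0(0)$ for all $x$ with $g(x)\ne 0$. Replacing $d_0$ by $d_0-d_0(0)\cdot\id$ (which subtracts a trivial $\frac12$-derivation), we may assume $d_0(0)=0$ and $d_0(x)=0$ whenever $g(x)\ne 0$. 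It remains to show $d_0$ vanishes on $\ker g$ as well. For this, use the midpoint relation with $x\in\ker g$, $y\notin\ker g$: then $g(x-y)=g(-y)=-g(y)\ne 0$, while $f(x,y)=g(x)h(y)-g(y)h(x)=-g(y)h(x)$, so $f(x,y)+g(x-y)=-g(y)(h(x)+1)$, which is nonzero unless $h(x)=-1$. For such $x$ (with $h(x)\ne -1$) and such $y$ we get $2d_0(x+y)=d_0(x)+d_0(y)=d_0(x)$, and since $g(x+y)=g(y)\ne 0$ the left side is $0$; hence $d_0(x)=0$ for all $x\in\ker g$ with $h(x)\ne -1$.

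\textbf{Step 2: handle the exceptional locus $g(x)=0,\ h(x)=-1$.} Let $x$ satisfy $g(x)=0$ and $h(x)=-1$; we must still show $d_0(x)=0$. Pick $y$ with $g(y)\ne 0$ (possible since $g\ne 0$). The bracket $[u_x,u_y]$ has coefficient $f(x,y)+g(x-y)=-g(y)h(x)-g(y)=-g(y)(h(x)+1)=0$, so that pair gives nothing. Instead use $y$ with $g(y)\ne 0$ and form $[u_{x-y},u_y]$: here $g(x-y)=-g(y)\ne 0$ so $d_0(x-y)=0$ by Step 1, and the coefficient is $f(x-y,y)+g(x-2y)=f(x,y)+g(x-2y)=-g(y)h(x)-2g(y)=-g(y)(h(x)+2)=g(y)\ne 0$ (using $h(x)=-1$). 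The midpoint relation then yields $2d_0(x)=d_0(x-y)+d_0(y)=0+0=0$, so $d_0(x)=0$. This disposes of the last case, and combining all of it, $d_0\equiv 0$ after the normalization, i.e.\ the original $\vf_0$ is $d_0(0)\cdot\id\in\spn_F\{\id\}$.

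\textbf{Main obstacle.} The delicate point is ensuring, at each step, that there actually \emph{exists} a partner $y$ making the relevant bracket coefficient nonzero --- this is exactly where non-degeneracy of $f$ (equivalently, $g$ and $h$ not both ``small'') and \cref{f(a_c)-ne-0-ne-f(b_c)} (to find a single $y$ witnessing two non-vanishing conditions simultaneously) must be invoked with care; one also has to keep track that the coefficient $-g(y)(h(x)+1)$ or $-g(y)(h(x)+2)$ does not accidentally vanish because of a coincidence in the values of $h$. A secondary bookkeeping issue is that, unlike the positive-degree case (\cref{d_a-for-g(b)f(a_b)-ne-0,d_a-for-g(b)f(a_b)=0}), here there is no surviving exceptional basis element: the eigenvalue normalization must be carried consistently so that the final answer is genuinely a scalar, with no extra one-dimensional family as in the $g=0$ case (\cref{descr-Dl(L)}).
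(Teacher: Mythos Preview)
Your proof is correct and follows essentially the same route as the paper: derive the midpoint relation $2d_0(x+y)=d_0(x)+d_0(y)$ whenever $f(x,y)+g(x-y)\ne 0$, then run the three-case analysis on $(g(x),h(x))$ --- first $g(x)\ne 0$ via $y=0$, then $g(x)=0,\ h(x)\ne -1$ via any $y$ with $g(y)\ne 0$, and finally $g(x)=0,\ h(x)=-1$ via the pair $(x-y,y)$ (the paper uses the equivalent substitution $(x+y,-y)$). Two cosmetic remarks: in Step~2 your coefficient $-g(y)(h(x)+2)$ equals $-g(y)$ rather than $g(y)$, but this is harmless since only its nonvanishing matters; and your ``main obstacle'' paragraph invokes \cref{f(a_c)-ne-0-ne-f(b_c)}, though in fact nothing beyond the existence of some $y$ with $g(y)\ne 0$ is ever used.
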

\begin{proof}
	Writing \cref{half-der-in-terms-of-d_a} with $a=0$, we get
	\begin{align}\label{2(f(x_y)+g(x-y))d_0(x+y)-is-(f(x_y)+g(x-y))(d_0(x)+d_0(y))}
		2(f(x,y)+g(x-y))d_0(x+y)=(f(x,y)+g(x-y))(d_0(x)+d_0(y)).
	\end{align}

\textit{Case 1.} $g(x)\ne 0$. Then put $y=0$ in \cref{2(f(x_y)+g(x-y))d_0(x+y)-is-(f(x_y)+g(x-y))(d_0(x)+d_0(y))} to get:
\begin{align}\label{d_0(x)-is-d_0(0)}
	d_0(x)=d_0(0),\text{ if }g(x)\ne 0.
\end{align}

\textit{Case 2.} $g(x)=0$ and $h(x)\ne -1$. Then $f(x,y)=-g(y)h(x)$, and \cref{2(f(x_y)+g(x-y))d_0(x+y)-is-(f(x_y)+g(x-y))(d_0(x)+d_0(y))} is equivalent to
\begin{align}\label{2g(y)d_0(x+y)-is-g(y)(d_0(x)+d_0(y))}
	2g(y)d_0(x+y)=g(y)(d_0(x)+d_0(y)).
\end{align}
Choose $y\in A$ such that $g(y)\ne 0$. Then $d_0(y)=d_0(x+y)=d_0(0)$ by \cref{d_0(x)-is-d_0(0)}. Hence, \cref{2g(y)d_0(x+y)-is-g(y)(d_0(x)+d_0(y))} yields $d_0(x)=d_0(0)$.

\textit{Case 3.} $g(x)=0$ and $h(x)=-1$. Let us use \cref{2(f(x_y)+g(x-y))d_0(x+y)-is-(f(x_y)+g(x-y))(d_0(x)+d_0(y))} with $(x,y)$ replaced by $(x+y,-y)$. Observe that $f(x+y,-y)=-f(x,y)=-g(y)$, so we get
\begin{align}\label{6g(y)d_0(x)-is-3g(y)(d_0(x+y)+d_0(-y))}
	6g(y)d_0(x)=3g(y)(d_0(x+y)+d_0(-y)).
\end{align}
Choosing $y\in A$ with $g(y)\ne 0$ we have $d_0(x+y)=d_0(-y)=d_0(0)$ by \cref{d_0(x)-is-d_0(0)}. Thus, $d_0(x)=d_0(0)$ due to \cref{6g(y)d_0(x)-is-3g(y)(d_0(x+y)+d_0(-y))}.
\end{proof}

Given $\lb,\mu\in F$, we introduce the following notation: 
\begin{align*}
	A_{(\lb,\mu)}:=\{a\in A\mid g(a)=\lb\text{ and }h(a)=\mu\}.
\end{align*}
\begin{lem}\label{af_(a_b)-in-D(L)}
	Let $a\in A_{(0,-2)}$ and $b\in A_{(0,-1)}$. Then the linear map $\af_{(a,b)}:L\to L$ given by
	\begin{align*}
		\af_{(a,b)}(u_c)=
		\begin{cases}
			u_b, & c=a,\\
			0, & \text{otherwise},
		\end{cases}
	\end{align*}
	is a $\frac 12$-derivation of $L$.
\end{lem}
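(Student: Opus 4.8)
The plan is to argue exactly as in the proof of \cref{af-in-DL(L)}: I will show that $\af_{(a,b)}$ maps $L$ into the center $Z(L)$ and annihilates the square $[L,L]$, so that \emph{both} sides of the defining identity \cref{vf(xy)=half(vf(x)y+xvf(y))} vanish identically when $\vf=\af_{(a,b)}$, making that identity hold trivially.

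First I would observe that $u_b\in Z(L)$. Since we are in the case $g\ne 0$, \cref{Z(L)-and-[L_L]}\cref{item-Z(L)} gives $Z(L)=\spn_F\{u_c\mid g(c)=h(c)+1=0\}$, and $b\in A_{(0,-1)}$ means precisely $g(b)=0$ and $h(b)+1=0$; hence $u_b\in Z(L)$. Because $\af_{(a,b)}(L)=Fu_b$, this forces $[\af_{(a,b)}(u_x),u_y]=[u_x,\af_{(a,b)}(u_y)]=0$ for all $x,y\in A$, so the right-hand side of \cref{vf(xy)=half(vf(x)y+xvf(y))} is always zero.

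Next I would check that $\af_{(a,b)}$ kills $[L,L]$. Again using $g\ne 0$, \cref{Z(L)-and-[L_L]}\cref{item-[L_L]} gives $[L,L]=\spn_F\{u_c\mid g(c)\ne 0\text{ or }h(c)+2\ne 0\}$, and since $a\in A_{(0,-2)}$ satisfies $g(a)=0$ and $h(a)+2=0$, the basis element $u_a$ does not occur in $[L,L]$ (equivalently, $f(x,y)+g(x-y)=0$ whenever $x+y=a$). As $\af_{(a,b)}$ annihilates every $u_c$ with $c\ne a$, we get $\af_{(a,b)}([L,L])=\{0\}$, so the left-hand side of \cref{vf(xy)=half(vf(x)y+xvf(y))} is zero as well; hence $\af_{(a,b)}\in\Dl(L)$.

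I expect no genuine obstacle: the whole argument is just the explicit description of $Z(L)$ and $[L,L]$ furnished by \cref{Z(L)-and-[L_L]}, after which the verification is immediate. (Equivalently, one may observe that $\af_{(a,b)}$ is the operator ``multiplication by $u_a$'' for the transposed Poisson structure obtained as the extension by zero, in the sense of \cref{(a_1+a_2)-times-(b_1+b_2)}, of the commutative associative product sending $u_a\ast u_a$ to $u_b$ on the complement $\spn_F\{u_c\mid g(c)=h(c)+2=0\}$ of $[L,L]$; but the direct check above is shorter.)
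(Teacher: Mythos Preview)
Your proof is correct and follows essentially the same approach as the paper: both argue that $\af_{(a,b)}(L)\sst Z(L)$ and $\af_{(a,b)}([L,L])=\{0\}$ via \cref{Z(L)-and-[L_L]}, so that both sides of \cref{vf(xy)=half(vf(x)y+xvf(y))} vanish. Your write-up simply spells out the verifications in slightly more detail than the paper does.
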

\begin{proof}
		Observe by \cref{Z(L)-and-[L_L]}\cref{item-Z(L)} that $\af_{(a,b)}(L)\sst Z(L)$, showing that the right-hand side of \cref{vf(xy)=half(vf(x)y+xvf(y))} is always zero for $\vf=\af_{(a,b)}$. Furthermore, $\af_{(a,b)}([L,L])=\{0\}$ by \cref{Z(L)-and-[L_L]}\cref{item-[L_L]}, and the left-hand side of \cref{vf(xy)=half(vf(x)y+xvf(y))} is always zero as well.
\end{proof}

\begin{prop}\label{descr-Dl(L)-for-g-ne-0}
	We have $\Dl(L)=\spn_F(\{\id\}\cup\{\af_{(a,b)}\mid a\in A_{(0,-2)}\text{ and }b\in A_{(0,-1)}\})$.
\end{prop}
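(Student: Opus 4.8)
The plan is to assemble the statement from the lemmas just established, in the same spirit in which \cref{descr-Dl(L)} was obtained in the case $g=0$. The inclusion $\spn_F(\{\id\}\cup\{\af_{(a,b)}\})\sst\Dl(L)$ is immediate: $\id$ is always a $\frac12$-derivation, and each $\af_{(a,b)}$ with $a\in A_{(0,-2)}$ and $b\in A_{(0,-1)}$ is one by \cref{af_(a_b)-in-D(L)}.

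For the reverse inclusion I would take $\vf\in\Dl(L)$, use the $A$-grading to write $\vf=\sum_{a\in A}\vf_a$ with each $\vf_a\in\Dl(L)$ of the form \cref{vf_a(u_b)=d_a(u_b)u_(a+b)}, and analyse the homogeneous components. The component $\vf_0$ lies in $\spn_F\{\id\}$ by \cref{vf_0-in-<id>}. For $a\ne 0$, \cref{d_a-for-g(b)f(a_b)-ne-0} forces $d_a(x)=0$ whenever $g(x)f(a,x)\ne 0$, while \cref{d_a-for-g(b)f(a_b)=0} forces $d_a(x)=0$ whenever $g(x)f(a,x)=0$ unless $g(a)=g(x)=0$, $h(a)=1$ and $h(x)=-2$; since $g(a)=g(x)=0$ already gives $f(a,x)=g(a)h(x)-g(x)h(a)=0$, the conclusion is that $\vf_a(u_x)\ne 0$ can only occur when $a\in A_{(0,1)}$ and $x\in A_{(0,-2)}$, in which case $a+x\in A_{(0,-1)}$ because $g(a+x)=0$ and $h(a+x)=-1$. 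Thus each nonzero $\vf_a$ with $a\ne 0$ is a scalar multiple of some $\af_{(x,a+x)}$.

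The one point requiring an argument beyond the cited lemmas is to turn the a priori possibly infinite decomposition $\vf=\vf_0+\sum_{a\ne 0}\vf_a$ into a finite linear combination. Here I would observe that $A_{(0,0)}=\ker g\cap\ker h=\{0\}$: if $g(a)=h(a)=0$, then $f(a,b)=g(a)h(b)-g(b)h(a)=0$ for all $b\in A$, so $a=0$ by non-degeneracy \cref{f(a_A)=0-iff-a=0}. Consequently every $A_{(\lb,\mu)}$ is a coset of $A_{(0,0)}$, hence empty or a singleton; in particular there is at most one $a\ne 0$ with $\vf_a\ne 0$, namely the unique element of $A_{(0,1)}$ when it exists, and for that $a$ there is at most one $x$ with $\vf_a(u_x)\ne 0$, namely the unique element of $A_{(0,-2)}$. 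Therefore $\vf\in\spn_F(\{\id\}\cup\{\af_{(a,b)}\})$, and in fact $\dim\Dl(L)\le 2$. The proof is short; the only mild obstacle is this last piece of bookkeeping, since all the substantive computation is already contained in \cref{d_a-for-g(b)f(a_b)-ne-0,d_a-for-g(b)f(a_b)=0,vf_0-in-<id>}.
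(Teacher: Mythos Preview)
Your proof is correct and follows the same route as the paper, which simply cites \cref{d_a-for-g(b)f(a_b)-ne-0,d_a-for-g(b)f(a_b)=0,vf_0-in-<id>} for one inclusion and \cref{af_(a_b)-in-D(L)} for the other without further comment. Your extra observation that non-degeneracy of $f$ forces $A_{(0,0)}=\{0\}$, hence every $A_{(\lb,\mu)}$ is empty or a singleton, is not in the paper's proof but is a genuine tightening: it rigorously settles the finiteness issue you raise and yields the sharper fact $\dim_F\Dl(L)\le 2$, which the paper does not state explicitly. One cosmetic point: the sentence ``each nonzero $\vf_a$ with $a\ne 0$ is a scalar multiple of some $\af_{(x,a+x)}$'' appears before you have established the singleton property, so at that stage ``linear combination'' would be the accurate phrase; once the singleton argument is in place it indeed becomes a scalar multiple, as you claim.
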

\begin{proof}
	The fact that any $\vf\in\Dl(L)$ is a linear combination of $\id$ and $\af_{(a,b)}$ follows from \cref{d_a-for-g(b)f(a_b)-ne-0,d_a-for-g(b)f(a_b)=0,vf_0-in-<id>}. Conversely, the inclusion $\id\in\Dl(L)$ is trivial, and $\af_{(a,b)}\in\Dl(L)$ is \cref{af_(a_b)-in-D(L)}.
\end{proof}

\begin{thrm}\label{descr-TP-on-L-g-ne-0}
	Let $\mathrm{char}(F)=0$, $g\ne 0$ and $f$ be non-degenerate. If $(g(a),h(a))\ne (0,-1)$ for all $a\in A$, then all the transposed Poisson algebra structures on $L(A,g,f)$ are trivial. Otherwise, the transposed Poisson algebra structures on $L(A,g,f)$ are exactly extensions by zero of commutative associative products $*$ on the complement $V=\spn_F\{u_a\mid g(a)=h(a)+2=0\}$ of $[L,L]$ with values in $Z(L)=\spn_F\{u_a\mid g(a)=h(a)+1=0\}$.
\end{thrm}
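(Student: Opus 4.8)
The plan is to derive everything from the description of $\Dl(L)$ in \cref{descr-Dl(L)-for-g-ne-0} by way of \cref{glavlem} and \cref{princth}, treating the two cases of the statement in turn. If $(g(a),h(a))\ne(0,-1)$ for every $a\in A$, then $A_{(0,-1)}=\emptyset$, so none of the maps $\af_{(a,b)}$ occurs and \cref{descr-Dl(L)-for-g-ne-0} collapses to $\Dl(L)=\spn_F\{\id\}$; by \cref{princth} every transposed Poisson structure on $L$ is then trivial.

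Assume now $A_{(0,-1)}\ne\emptyset$. By \cref{Z(L)-and-[L_L]}, $V=\spn_F\{u_a\mid a\in A_{(0,-2)}\}$ is a complement of $[L,L]$, $Z(L)=\spn_F\{u_a\mid a\in A_{(0,-1)}\}$, and $Z(L)\sst[L,L]$, whence $V\cap Z(L)=0$. One inclusion is then the general construction from \cref{prelim}: the extension by zero of a commutative associative product $*\colon V\times V\to Z(L)$ satisfies the transposed Leibniz identity as verified there, and it is associative because the image of $*$ lies in $[L,L]$, so that all triple products of the extension vanish; hence it is a transposed Poisson structure on $L$. It remains to prove these are the only ones.

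For the converse, let $\cdot$ be a transposed Poisson structure on $L$. By \cref{glavlem}, for each $c\in A$ the operator $x\mapsto u_c\cdot x$ lies in $\Dl(L)$, so \cref{descr-Dl(L)-for-g-ne-0} — using that $\af_{(a,b)}(u_d)=0$ unless $d=a\in A_{(0,-2)}$, in which case the value equals $u_b\in Z(L)$ — yields scalars $\lb_c\in F$ with $u_c\cdot u_d=\lb_c u_d$ whenever $d\notin A_{(0,-2)}$ and $u_c\cdot u_d-\lb_c u_d\in Z(L)$ whenever $d\in A_{(0,-2)}$. First I would show that all $\lb_c$ vanish: since $g\ne0$, at least two distinct elements of $A$ — for instance $0$ and any $a$ with $g(a)\ne0$ — lie outside $A_{(0,-1)}$, so for a fixed $c$ one can choose $d\ne c$ with $d\notin A_{(0,-1)}$; comparing the $u_d$-coefficients on the two sides of $u_c\cdot u_d=u_d\cdot u_c$, where every term other than $\lb_c u_d$ and $\lb_d u_c$ belongs to $Z(L)=\spn_F\{u_b\mid b\in A_{(0,-1)}\}$, forces $\lb_c=0$. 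With $\lb_c=0$ for every $c$ we get $u_c\cdot u_d=0$ for $d\notin A_{(0,-2)}$ and $u_c\cdot u_d\in Z(L)$ for $d\in A_{(0,-2)}$, and a final use of commutativity (if $c\notin A_{(0,-2)}$ then $u_c\cdot u_d=u_d\cdot u_c=0$) gives $u_c\cdot u_d=0$ unless both $c,d\in A_{(0,-2)}$. Thus $\cdot$ vanishes off $V\times V$ and maps $V\times V$ into $Z(L)$, i.e. it is the extension by zero of $*:=\cdot|_{V\times V}\colon V\times V\to Z(L)$, which is commutative and associative because $\cdot$ is.

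Given \cref{descr-Dl(L)-for-g-ne-0}, the argument is largely routine; the one point that needs care — and the main obstacle of the proof — is the elimination of all diagonal coefficients $\lb_c$ (for every $c\in A$, including those in $A_{(0,-1)}$), which uses commutativity together with the observation that $|A\setminus A_{(0,-1)}|\ge2$, followed by the support analysis that confines $\cdot$ to $V\times V$. A further, purely terminological, point is that ``associative product $*\colon V\times V\to Z(L)$'' must be read through its extension by zero, which is harmless here since $Z(L)\sst[L,L]$.
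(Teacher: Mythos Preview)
Your argument is correct and follows the paper's approach: reduce via \cref{descr-Dl(L)-for-g-ne-0} and \cref{glavlem}/\cref{princth}, then use commutativity to kill the scalar part $\lb_c\,\id$ of each multiplication operator, leaving only the $\af_{(a,b)}$-contributions supported on $V\times V$ with values in $Z(L)$. Your elimination of the $\lb_c$'s in a single stroke (picking $d\ne c$ with $d\notin A_{(0,-1)}$ and reading off the $u_d$-coefficient) is a mild streamlining of the paper's two-case split into $c\notin A_{(0,-2)}$ and $c\in A_{(0,-2)}$, but the idea is the same.
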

\begin{proof}
	Let $\cdot$ be a transposed Poisson algebra structure on $L(A,g,f)$. If $(g(a),h(a))\ne (0,-1)$ for all $a\in A$, then $A_{(0,-1)}=\emptyset$, so by \cref{descr-Dl(L)-for-g-ne-0} we have $\Dl(L)=\{\id\}$. It follows from \cite[Theorem 8]{FKL} that $\cdot$ is trivial.
	
	Assume that $(g(a),h(a))=(0,-1)$ for some $a\in A$. Then $(g(2a),h(2a))=(0,-2)$, so both $A_{(0,-1)}$ and $A_{(0,-2)}$ are non-empty. By \cref{descr-Dl(L)-for-g-ne-0,glavlem} for any $a\in A$ there are $k_a\in F$ and $\{l_a^{(x,y)}\}_{x\in A_{(0,-2)},y\in A_{(0,-1)}}\sst F$, such that
	\begin{align}\label{u_a-cdot-u_b-general-g-ne-0}
		u_a\cdot u_b=k_au_b+\sum_{x\in A_{(0,-2)},y\in A_{(0,-1)}} l_a^{(x,y)}\af_{(x,y)}(u_b)=
		\begin{cases}
			k_au_b+\sum_{y\in A_{(0,-1)}}l_a^{(b,y)}u_y, & b\in A_{(0,-2)},\\
			k_a u_b, & b\not\in A_{(0,-2)}.
		\end{cases}
	\end{align}
	Since $\ch(F)=0$ and $A$ is torsion-free, then $A\setminus A_{(0,-1)}$ and $A\setminus A_{(0,-2)}$ are infinite (indeed, if $a\in A_{(\lb,\mu)}$ with $(\lb,\mu)\ne (0,0)$, then $ka\not\in A_{(\lb,\mu)}$ for all $k\ne 1$). So, for any $a\not\in A_{(0,-2)}$ there exists $b\not\in A_{(0,-2)}$, $b\ne a$. Then by \cref{u_a-cdot-u_b-general-g-ne-0} and commutativity of $\cdot$ we have $k_a u_b=u_a\cdot u_b=u_b\cdot u_a=k_b u_a$. Consequently, 
	\begin{align}\label{k_a=0-for-a-not-in-A_0_-2}
		k_a=0\text{ for }a\not\in A_{(0,-2)}.
	\end{align} 
Now let $a\in A_{(0,-2)}$ and $b\not\in A_{(0,-1)}\cup A_{(0,-2)}$. Then $k_bu_a+\sum_{y\in A_{(0,-1)}}l_b^{(a,y)}u_y=u_b\cdot u_a=u_a\cdot u_b=k_a u_b$ gives 
\begin{align}\label{k_a=0-for-a-in-A_0_-2}
	k_a=0\text{ for }a\in A_{(0,-2)}.
\end{align}
It follows from \cref{k_a=0-for-a-not-in-A_0_-2,k_a=0-for-a-in-A_0_-2} and the commutativity of $\cdot$ that
\begin{align*}
	u_a\cdot u_b=
	\begin{cases}
		\sum_{y\in A_{(0,-1)}}l_a^{(b,y)}u_y, & a,b\in A_{(0,-2)},\\
		0, & \text{otherwise}.
	\end{cases}
\end{align*}
Thus, $\cdot$ is of the form \cref{(a_1+a_2)-times-(b_1+b_2)} for the commutative associative product $u_a*u_b=\sum_{y\in A_{(0,-1)}}l_a^{(b,y)}u_y\in Z(L)$ on $V=\spn_F\{u_a\mid a\in A_{(0,-2)}\}$.
	
	Conversely, in view of \cref{Z(L)-and-[L_L]} the product \cref{u_0-cdot-u_0=u_0} is of the form \cref{(a_1+a_2)-times-(b_1+b_2)}, so $(L,\cdot,[\cdot,\cdot])$ is a transposed Poisson (and usual Poisson) algebra.
\end{proof}

\begin{rem}
	Consider $\B(q)$ with $q\ne 0$ as the complex Block algebra $L(\Z\times\Z,g,f)$, where $g$, $f$ and $h$ are given by \cref{g-and-f-for-B(q),h-for-B(q)}. Then $(g(m,i),h(m,i))=(0,-2)\iff (m,i)=(0,-2q)$ and $(g(m,i),h(m,i))=(0,-1)\iff (m,i)=(0,-q)$, so we again obtain the description of transposed Poisson algebra structures on $\B(q)$ given in \cite[Theorem 2.14]{kk22} as a particular case of 
 \cref{descr-TP-on-L-g-ne-0}.
\end{rem}


	
	




\end{document}